\theoremstyle{plain}
\newtheorem{thm}{Theorem}
\newtheorem{lem}[thm]{Lemma}
\theoremstyle{definition}
\newtheorem{defi}[thm]{Definition}
\numberwithin{thm}{section}
\numberwithin{equation}{section}
\def\supp{\operatorname{supp}}
\def\esup{\operatornamewithlimits{ess\,sup}}
\title{Weighted inequalities involving iteration of two Hardy integral operators}
\author{%
Amiran Gogatishvili$^{1}$ \ and \ Tu\u{g}\c{c}e \"{U}nver$^{1,2*}$
}
\begin{document}

\date{}

\maketitle

\vspace{-0.5cm}

\begin{center}
{\footnotesize 
$^1$ Institute of Mathematics of the
 Czech Academy of Sciences,
 \v Zitn\'a~25,
 115~67 Praha~1,
 Czech Republic \\
$^2$Department of Mathematics,
Kirikkale University,
71450 Yahsihan, Kirikkale,
Turkey \\
*Corresponding author. E-mail: tugceunver@kku.edu.tr, \texttt{ORCiD:}{0000-0003-0414-8400}\\
Contributing author: gogatish@math.cas.cz, \texttt{ORCiD:}{0000-0003-3459-0355}
}
\end{center}

\bigskip
\noindent

\bigskip
\noindent
{\small{\bf ABSTRACT.}
Let $1\leq p <\infty$ and $0 < q,r < \infty$. We characterize validity of the inequality for the composition of the Hardy operator,
\begin{equation*}
\bigg(\int_a^b \bigg(\int_a^x \bigg(\int_a^t f(s)ds \bigg)^q u(t) dt \bigg)^{\frac{r}{q}} w(x) dx  \bigg)^{\frac{1}{r}} \leq C \bigg(\int_a^b f^p(x) v(x) dx \bigg)^{\frac{1}{p}} 
\end{equation*} 
for all non-negative measurable functions on $(a,b)$, $-\infty \leq a < b \leq \infty$. We construct a more straightforward discretization method than those previously presented in the literature, and we characterize this inequality in both discrete and continuous forms.
}

\medskip
\noindent

{\small{\bf Keywords}{: weighted Hardy inequality, iterated operators, Copson operator, Hardy operator, inequalities for monotone functions}
}

\medskip
{\small{\bf 2010 Mathematics Subject Classification}{: 26D10}}

\baselineskip=\normalbaselineskip

\section{Introduction and the main results}

Let $-\infty\leq a<b \leq \infty$. Denote by $\mathfrak{M}^+(a,b)$ the set of all non-negative measurable functions on $(a,b)$ and $\mathfrak{M}^{\uparrow}(a,b)$ is the class of non-decreasing elements of $\mathfrak{M}^+(a,b)$. 

In operator theory, weighted inequalities involving operator composition may be found in a wide range of topics. Let $0< q,r <\infty$ and $1 \leq p <\infty$. The validity of inequalities
\begin{equation} \label{HC-ineq.}
\bigg(\int_0^{\infty} \bigg(\int_0^x \bigg(\int_t^{\infty} h(s) ds\bigg)^q u(t) dt \bigg)^{\frac{r}{q}} w(x) dx  \bigg)^{\frac{1}{r}} \leq C \bigg(\int_0^{\infty} h(x)^p v(x) dx \bigg)^{\frac{1}{p}}, 
\end{equation} 
and
\begin{equation} \label{HH-ineq.}
\bigg(\int_0^{\infty} \bigg(\int_0^x \bigg(\int_0^t h(s) ds\bigg)^q u(t) dt \bigg)^{\frac{r}{q}} w(x) dx  \bigg)^{\frac{1}{r}} \leq C \bigg(\int_0^{\infty} h(x)^p v(x) dx \bigg)^{\frac{1}{p}},
\end{equation} 
for all $h \in \mathfrak{M}^+(0,\infty)$ are crucial, because many classical inequalities can be reduced to them. For example, duality techniques reduce the embeddings between Lorentz-type spaces, Morrey-type spaces and Ces\'{a}ro-type spaces to the weighted iterated inequalities (see, e.g. \cite{GKPS,GMU-CopCes,GMU-cLMLM,Unver-Ces}).

Various approaches have been used to handle inequalities \eqref{HC-ineq.} and \eqref{HH-ineq.} resulting with conditions of different nature. Inequality \eqref{HC-ineq.} is investigated  thoroughly. Detailed information on the development and history of this inequality may be found in the recent paper \cite{GMPTU}.


Our goal in this paper is to characterize \eqref{HH-ineq.}. When $q=1$ using Fubini's Theorem, inequality \eqref{HH-ineq.} reduces to the weighted Hardy-type inequality involving kernel, that is,
\begin{equation} \label{kernel-ineq.}
\bigg(\int_0^{\infty} \bigg(\int_0^x \bigg(\int_s^x u(t) dt \bigg) h(s) ds \bigg)^{r} w(x) dx  \bigg)^{\frac{1}{r}} \leq C \bigg(\int_0^{\infty} h(x)^p v(x) dx \bigg)^{\frac{1}{p}}, \quad h \in \mathfrak{M}^{+}(0,\infty)
\end{equation} 
Inequality \eqref{kernel-ineq.} was completely characterized in \cite{MR-Saw89,BloKer91,Oinarov94,Step94} when $1\leq r, p < \infty$.  However, for a long period there was no adequate characterization in the case when $0 < r < 1 \leq p < \infty$. Several attempts have been made to tackle this case (see, e.g. \cite{Step94,Lai99,Prok2013,GogStep2013}), in some works necessary and sufficient conditions did not match, while in others characterization had a discrete form or involved auxiliary functions, hence it was not easily verifiable. Finally, in \cite{Krepela-ker} the missing integral conditions were provided. 

We should also mention that in \cite{GogStep-JMAA,GogStep2013}, using reduction techniques, Hardy inequality involving non-decreasing functions, that is,
\begin{equation}\label{monot.ineq}
\bigg(\int_0^{\infty} \bigg(\int_0^x f(s) u(s) ds \bigg)^q  w(x) dx  \bigg)^{\frac{1}{q}} \leq C \bigg(\int_0^{\infty} f(x)^p v(x) dx \bigg)^{\frac{1}{p}},  \quad f \in \mathfrak{M}^{\uparrow}(0,\infty)
\end{equation}
is reduced to inequality \eqref{kernel-ineq.}.  However, as we have already mentioned, at that point of time the characterizations of the reduced inequalities were not known. Combination of  Theorem~3.13, Theorem~3.18 and Corollary~3.2 from \cite{GogStep2013} provides a characterization of \eqref{monot.ineq} but the result is non-standard and it is hard to extract the characterization from the theorems. The earlier works on inequality \eqref{monot.ineq} can be found in \cite{Hein-Step,Myas-Per-Step,Goldman12, Goldman13}.


We would like to point out that the characterization of \eqref{monot.ineq} may be obtained directly from inequality  \eqref{HH-ineq.} without any further work (see, the proof of Theorem~\ref{Cor}). We can provide the characterization of inequality \eqref{monot.ineq} as a direct outcome of our main theorem (see, Theorem~\ref{T:main}); nevertheless, we would like to provide it here to integrate all relevant parameter choices into a single theorem for the reader's convenience (see, Theorem~\ref{Cor}). 

In the general cases \eqref{HH-ineq.} is characterized in \cite{GogMus-MIA} but the conditions are in a non-standard form. It was also considered in \cite{ProkStep}, but the conditions are not applicable because they involve auxiliary functions. The special case of the dual version of \eqref{HH-ineq.} which involves iteration of the Copson operators $\int_t^{\infty} h$ is treated in \cite{Mus-Pos} when $p=1$, using a combination of reduction techniques and discretization. Recently, in \cite{KrePick-CC}, a more complicated discretization method is used to establish a characterisation of the same inequality that involves iteration of the Copson operators and is restricted to non-degenerate weights, and the case $p=1$ is presented without a proof. In our approach the case $p >1$ is not separated from $p= 1$.

As one can see in Section~\ref{Disc.Char.}, discretization method transforms the inequality at hand equivalently to discrete inequalities that involve local characterizations of inequalities having low-order iterations. For the very reason our aim in this paper is to revisit inequality \eqref{HH-ineq.} on $(a,b)$ where $-\infty\leq a<b \leq \infty$.

Recently, in \cite{GMPTU}, with a new and simpler discretization technique requires neither parameter restrictions  nor non-degeneracy conditions, characterization of \eqref{HC-ineq.} is given. We adapt this approach to the specific demands of the inequality considered in this paper. 

Let $-\infty\leq a<b \leq \infty$ and a weight be a positive measurable function on $(a, b)$. The principal goal of this study is to determine the necessary and sufficient conditions on weights $u,v,w$ on $(a,b)$ for which
\begin{equation} \label{main-iterated}
\bigg(\int_a^b \bigg(\int_a^x \bigg(\int_a^t f(s)ds \bigg)^q u(t) dt \bigg)^{\frac{r}{q}} w(x) dx  \bigg)^{\frac{1}{r}} \leq C \bigg(\int_a^b f^p(x) v(x) dx \bigg)^{\frac{1}{p}}
\end{equation} 
holds for $f\in \mathfrak{M}^+(a,b)$, with exponents $1 \leq p < \infty$ and $0 < q,r < \infty$. It is worth noting that if $p<1$, inequality \eqref{main-iterated} only holds for trivial functions. 

Let us first go through the essential notations and conventions before we present our main results. The left and right sides of the inequality numbered by $(*)$ are denoted by LHS$(*)$ and RHS$(*)$, respectively. We put $0.\infty =  \infty/\infty= 0/0 =0$. The symbol $A \lesssim B$ means that there exists a constant $c>0$ such that $A \leq c B$ where $c$ depends only on the parameters $p,q,r$. If both $A \lesssim B$ and $B \lesssim A$, then we write $A \approx B$.

For  $1\leq p < \infty$,  and $x,y \in [a,b]$, denote by
\begin{align}\label{Vp} 
	V_p(x, y) := \left\{ \begin{array}{ccc}
		\big(\int_x^y v^{-\frac{1}{p-1}}\big)^{\frac{p-1}{p}}, & 1<p<\infty,  \\
			\esup\limits_{s \in (x, y)} v(s)^{-1}, & p=1. 
	\end{array}
	\right.
\end{align}

Now, we are ready to formulate our main result. 

\begin{thm}\label{T:main}
Let $1 \leq p < \infty$, $0 < q, r < \infty$ and let $u, v, w$ be weights on $(a,b)$.  Then inequality \eqref{main-iterated} holds for all $f \in \mathfrak{M}^+(a, b)$ if and only if 

\rm(i) $p \leq r $, $p\leq q$ and
\begin{equation}\label{C1}
	C_1 :=  \esup_{x \in (a, b)} \bigg(\int_x^b w(t)  \bigg(\int_x^t u \bigg)^{\frac{r}{q}} dt \bigg)^{\frac{1}{r}} V_p(a, x) < \infty.
\end{equation}
Moreover, the best constant $C$ in inequality \eqref{main-iterated} satisfies $C \approx C_1$.

\rm(ii) $r < p \leq q$,
\begin{equation*}
	C_2 :=\bigg(\int_a^b \bigg(\int_x^b w \bigg)^{\frac{r}{p-r}} w(x) \esup_{t \in (a, x)} \bigg(\int_t^{x} u \bigg)^{\frac{rp}{q(p-r)}} V_p(a, t)^{\frac{pr}{p-r}} dx \bigg)^{\frac{p-r}{pr}}  < \infty,
\end{equation*}
and
\begin{equation}\label{C3}
	C_3 := \bigg( \int_a^b \bigg(\int_x^b w(s) \bigg(\int_{x}^s u \bigg)^{\frac{r}{q}} ds \bigg)^{\frac{r}{p-r}} w(x)  \esup_{t \in (a, x)} \bigg(\int_t^{x} u \bigg)^{\frac{r}{q}} V_p(a, t)^{\frac{pr}{p-r}} dx \bigg)^{\frac{p-r}{pr}}  < \infty.
\end{equation}
Moreover, the best constant $C$ in inequality \eqref{main-iterated} satisfies $C \approx C_2 + C_3$.

\rm(iii) $q< p \leq r $, $C_1 < \infty$ and
\begin{equation*}
	C_4 := \sup_{x \in (a, b)} \bigg(\int_x^b w \bigg)^{\frac{1}{q}} \bigg( \int_a^x \bigg(\int_{t}^x u \bigg)^{\frac{q}{p-q}} u(t) V_p(a, t)^{\frac{pq}{p-q}} dt  \bigg)^{\frac{p-q}{pq}} < \infty,
\end{equation*}
where $C_1$ is defined in \eqref{C1}. Moreover, the best constant $C$ in inequality \eqref{main-iterated} satisfies $C \approx C_1 + C_4$.

\rm(iv) $r < p$, $q < p$, $C_3 < \infty$ and  
\begin{equation*}
	C_5 := \bigg( \int_a^b \bigg(\int_x^b w \bigg)^{\frac{r}{p-r}} w(x) \bigg( \int_a^x \bigg(\int_{t}^x u \bigg)^{\frac{q}{p-q}} u(t) V_p(a, t)^{\frac{pq}{p-q}} dt \bigg)^{\frac{r(p-q)}{q(p-r)}} dx \bigg)^{\frac{p-r}{pr}}
	< \infty,
\end{equation*}
where $C_3$ is defined in \eqref{C3}. Moreover, the best constant $C$ in inequality \eqref{main-iterated} satisfies $C \approx C_3 + C_5$.
\end{thm}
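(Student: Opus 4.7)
The plan is to prove Theorem~\ref{T:main} by the discretization/anti-discretization strategy developed in \cite{GMPTU} for \eqref{HC-ineq.}, adapted to the kernel structure of \eqref{HH-ineq.}. First I would recast the right-hand side in a form exploitable by duality: since $F(t) := \int_a^t f(s)\,ds$ is nondecreasing with $F(a)=0$, $L^{p'}(v^{1-p'})$-duality (when $1<p<\infty$) and the essential-supremum representation (when $p=1$) express $\|f\|_{L^p(v)}$ in terms of a pairing that brings the weight $V_p(a,\cdot)$ naturally into the picture. This is the device that lets the $V_p$-factor appear in every $C_i$.

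Next I would construct a covering sequence $\{x_k\}_{k \in \mathbb{Z}} \subset (a,b)$ that makes an auxiliary cumulative quantity built from $u$ and $w$ double along consecutive blocks. The natural generator is the integrand $x \mapsto \int_x^b w(t)(\int_x^t u)^{r/q}\,dt$ visible inside $C_1$, since Fubini shows this is the quantity that encodes the coupling of $u$ and $w$ after one iteration. On each block $[x_k, x_{k+1}]$ the three-iterated inequality collapses to a single-level Hardy-type inequality whose characterization is classical, namely Bradley--Muckenhoupt--Kokilashvili type when $p \le r$ and Sawyer--M\'azya--Rozin type when $r<p$. This produces a block constant $A_k$ depending on $V_p(a,x_k)$ and local integrals of $u,w$, and the problem reduces to a discrete inequality among the $A_k$ (the inequalities in Section~\ref{Disc.Char.}, to which the statement is equivalent).

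The case split then corresponds to the $\ell^s$-aggregation of the block inequalities permitted by the various relationships between $p,q,r$. Case (i), $p\le \min(q,r)$, reduces to $\ell^\infty$ aggregation and yields the single condition $C_1$. Case (ii), $r<p\le q$, requires $\ell^{pr/(p-r)}$ aggregation in the outer variable: $C_2$ encodes the ``off-diagonal'' interaction where the inner $u$-integral spans a full block, while $C_3$ encodes the ``diagonal'' contribution. Case (iii), $q<p\le r$, requires a Sawyer-type localization on the middle operator, producing $C_4$ alongside the outer $C_1$. Case (iv), $r<p$ and $q<p$, combines both aggregation mechanisms and produces $C_3$ and $C_5$. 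Anti-discretization is then a standard comparison of geometric block-sums with integrals against the doubling generator, replacing each $A_k$ by an equivalent continuous weight.

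For necessity, in each case I would test \eqref{main-iterated} against functions extremising the candidate constants: in case (i) a point-localized test function recovers $C_1$ by Lebesgue differentiation, while in cases (ii)--(iv) block-wise test functions of the form $\sum_k c_k \chi_{[x_k,x_{k+1}]} v^{-1/(p-1)}$ (with the obvious $p=1$ analogue coming from the definition of $V_1$) with suitably chosen coefficients $c_k$ recover the remaining integral conditions. The main obstacle I expect is precisely the coupling between the three operator levels that prevents a direct composition of known one-level characterizations: the doubling sequence must simultaneously track the $u$--$w$ interaction present in the $C_1, C_3$ generators and the internal Hardy operator $\int_a^t f$, and the Sawyer-type piece required in cases (iii) and (iv) must be embedded block-wise without producing spurious overlaps in the aggregation step.
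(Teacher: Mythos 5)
Your overall strategy---discretize, reduce to local block Hardy constants, aggregate in an $\ell^s$ sense according to the relations among $p,q,r$, then anti-discretize---matches the paper's plan, and you correctly identify that $L^p$--$L^{p'}$ duality is the mechanism that brings $V_p$ into the game (the paper does this locally on each block via the representation in \eqref{Vp-def} and via the local Hardy constant $B(x_{k-1},x_k)$ in \eqref{B(a,b)}, rather than by a single global dualization of the right-hand side, but the underlying idea is the same).

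However, there is a concrete gap in your choice of discretizing generator. You propose building the covering sequence so that the \emph{coupled} quantity $x\mapsto\int_x^b w(t)\big(\int_x^t u\big)^{r/q}\,dt$ (the $C_1$-integrand) doubles along blocks. The paper discretizes only $W^*(t)=\int_t^b w$, i.e.\ the tail of $w$ alone, and this is precisely the simplification---inherited from \cite{GMPTU}---that lets the argument run without non-degeneracy hypotheses or parameter restrictions. Using the coupled generator is essentially what earlier papers such as \cite{Mus-Pos} and \cite{KrePick-CC} did; it forces you to assume that this composite function is finite, positive and non-degenerate, and it makes the splitting of the left-hand side over blocks considerably messier because the inner operator $\int_a^t f$ no longer interacts cleanly with the block endpoints. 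Related to this, your sketch elides the crucial intermediate step: after discretizing on $W^*$, the left-hand side of \eqref{main-iterated} decomposes into \emph{two} separate discrete inequalities --- a block-internal Hardy inequality governed by $B(x_{k-1},x_k)$ and a block-external tail inequality governed by $V_p(x_{j-1},x_j)$ together with $\int_{x_k}^{x_{k+1}}u$ --- and these are then characterized independently via \cite[Theorems~4.5--4.6]{GPU-JFA}. Without articulating this two-inequality decomposition, the statement ``on each block the three-iterated inequality collapses to a single-level Hardy-type inequality'' overstates the collapse: what actually collapses is only the \emph{local} piece, and the coupling between inner and outer operators survives in the second discrete inequality. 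To make your proposal rigorous you would need to replace the coupled generator by $W^*$ and supply the equivalence to the pair of discrete inequalities (the analogue of the paper's Lemma~\ref{T:equiv.ineq.-0}) before the case-by-case aggregation can begin.
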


\begin{thm}\label{Cor}
Let $0 < p,q <\infty$ and $u,v,w$ be weights on $(a,b)$. Then inequality 
\begin{equation}\label{monot.ineq-ab}
\bigg(\int_a^b \bigg(\int_a^x f(s) u(s) ds \bigg)^q  w(x) dx  \bigg)^{\frac{1}{q}} \leq C \bigg(\int_a^b f(x)^p v(x) dx \bigg)^{\frac{1}{p}}, 
\end{equation}
holds for all $f \in \mathfrak{M}^{\uparrow}(a,b)$ if and only if 

\rm(i) $p \leq q $, $p\leq 1$ and
\begin{equation}\label{C:C1}
	\mathcal{C}_1 :=  \esup_{x \in (a, b)} \bigg(\int_x^b w(t)  \bigg(\int_x^t u \bigg)^q dt \bigg)^{\frac{1}{q}} \bigg(\int_x^b v \bigg)^{-\frac{1}{p}} < \infty.
\end{equation}
Moreover, the best constant $C$ in inequality \eqref{monot.ineq-ab} satisfies $C \approx \mathcal{C}_1$.

\rm(ii) $q < p \leq 1$,
\begin{equation*}
\mathcal{C}_2 :=\bigg(\int_a^b \bigg(\int_x^b w \bigg)^{\frac{q}{p-q}} w(x) \esup_{t \in (a, x)} \bigg(\int_t^{x} u \bigg)^{\frac{pq}{p-q}} \bigg(\int_t^b v \bigg)^{-\frac{q}{p-q}} dx \bigg)^{\frac{p-q}{pq}}  < \infty,
\end{equation*}
and
\begin{equation}\label{C:C3}
\mathcal{C}_3 := \bigg( \int_a^b \bigg(\int_x^b w(s) \bigg(\int_{x}^s u \bigg)^q ds \bigg)^{\frac{q}{p-q}} w(x)  \esup_{t \in (a, x)} \bigg(\int_t^{x} u \bigg)^q \bigg(\int_t^b v \bigg)^{-\frac{q}{p-q}} dx \bigg)^{\frac{p-q}{pq}}  < \infty.
\end{equation}
Moreover, the best constant $C$ in inequality \eqref{monot.ineq-ab} satisfies $C \approx \mathcal{C}_2 + \mathcal{C}_3$.

\rm(iii) $1 < p \leq q $, $C_1 < \infty$ and
\begin{equation*}
\mathcal{C}_4 := \sup_{x \in (a, b)} \bigg(\int_x^b w \bigg) \bigg( \int_a^x \bigg(\int_{t}^x u \bigg)^{\frac{1}{p-1}} u(t) \bigg(\int_t^b v \bigg)^{-\frac{1}{p-1}} dt  \bigg)^{\frac{p-1}{p}} < \infty,
\end{equation*}
where $\mathcal{C}_1$ is defined in \eqref{C:C1}. Moreover, the best constant $C$ in inequality \eqref{monot.ineq-ab} satisfies $C \approx \mathcal{C}_1 + \mathcal{C}_4$.

\rm(iv) $q < p$, $1 < p$, $\mathcal{C}_3 < \infty$ and  
\begin{equation*}
\mathcal{C}_5 := \bigg( \int_a^b \bigg(\int_x^b w \bigg)^{\frac{q}{p-q}} w(x) \bigg( \int_a^x \bigg(\int_{t}^x u \bigg)^{\frac{1}{p-1}} u(t) \bigg(\int_t^b v \bigg)^{-\frac{1}{p-1}} dt \bigg)^{\frac{q(p-1)}{p-q}} dx \bigg)^{\frac{p-q}{pq}} < \infty,
\end{equation*}
where $\mathcal{C}_3$ is defined in \eqref{C:C3}. Moreover, the best constant $C$ in inequality \eqref{monot.ineq-ab} satisfies $C \approx \mathcal{C}_3 + \mathcal{C}_5$.
\end{thm}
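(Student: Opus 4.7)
The strategy is to derive Theorem \ref{Cor} from Theorem \ref{T:main} via a substitution. For $f \in \mathfrak{M}^{\uparrow}(a,b)$, after a standard monotone approximation I may assume $f$ is absolutely continuous with $f(a^+) = 0$, and write $f(x) = \int_a^x h(s)\,ds$ for some $h \in \mathfrak{M}^+(a,b)$. Fubini's theorem applied to the inner integral of \eqref{monot.ineq-ab} gives
\[
\int_a^x f(s)\,u(s)\,ds \;=\; \int_a^x \bigg(\int_a^t h(s)\,ds\bigg) u(t)\,dt,
\]
so the left-hand side of \eqref{monot.ineq-ab}, viewed as a functional of $h$, coincides with the left-hand side of \eqref{main-iterated} specialized to inner exponent $1$, middle exponent $q$, and the weights $u,w$ unchanged.

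For the right-hand side I would choose $v_{\text{new}}(t) := \int_t^b v$ when $p=1$ and $v_{\text{new}}(t) := (p-1)^{p-1}(\int_t^b v)^p / v(t)^{p-1}$ when $p>1$. For $p=1$ the identity $\int_a^b f v = \int_a^b h(t)(\int_t^b v)\,dt$ is exact by Fubini. For $p>1$ the integration-by-parts identity $\int_a^b f^p v = p\int_a^b f^{p-1}(t) h(t)(\int_t^b v)\,dt$ combined with H\"older's inequality yields the one-sided estimate $(\int_a^b f^p v)^{1/p} \leq p (\int_a^b h^p v_{\text{new}})^{1/p}$. The choice of $v_{\text{new}}$ is tailored so that the Muckenhoupt-type weight satisfies $V_p(a,t) = (\int_t^b v)^{-1/p}$; Theorem \ref{T:main} applied with parameters $(p,1,q)$ and weights $(u, v_{\text{new}}, w)$ then transforms, by direct calculation, each $C_i$ into the corresponding $\mathcal{C}_i$. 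The four cases of Theorem \ref{T:main} correspond via the specialization $q_{\text{new}} = 1$ to the four cases of Theorem \ref{Cor}: the dichotomies $p \leq q_{\text{new}}$ and $p \leq r_{\text{new}}$ become $p \leq 1$ and $p \leq q$, respectively.

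For necessity, I would test \eqref{monot.ineq-ab} on non-decreasing functions of the form $\chi_{(y,b)}$ for $y\in(a,b)$, together with dyadic superpositions $\sum_k c_k \chi_{(y_k,b)}$, which directly produce each $\mathcal{C}_i$ as a lower bound on the best constant $C$.

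The main obstacle is sufficiency when $p > 1$: H\"older only provides one-sided control between $\|f\|_{L^p(v)}$ and $\|h\|_{L^p(v_{\text{new}})}$, and the reverse inequality fails in general. I expect this to be handled by combining the above substitution with the observation that the pointwise bound $f(t) \leq (\int_t^b v)^{-1/p}\|f\|_{L^p(v)}$ is sharp on $\mathfrak{M}^{\uparrow}$, which is precisely the feature encoded in $\mathcal{C}_i$; a technical boundary correction in $V_p(a,t)$ when $\int_a^b v < \infty$ is addressed by an exhaustion argument on nested subintervals of $(a,b)$.
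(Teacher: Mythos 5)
Your approach differs from the paper's in the choice of substitution, and this difference creates a genuine gap for $p>1$. You set $f(x)=\int_a^x h$, which makes the left-hand side of \eqref{monot.ineq-ab} match \eqref{main-iterated} with $(p_{\mathrm{new}},q_{\mathrm{new}},r_{\mathrm{new}})=(p,1,q)$, but then $\int_a^b f^p v$ does not become a weighted $L^p$ norm of $h$ for any weight: integration by parts gives $\int_a^b f^p v = p\int_a^b f^{p-1}h\,(\int_\cdot^b v)$, and the passage to $\int_a^b h^p v_{\mathrm{new}}$ via H\"older is one-sided and not reversible (you acknowledge this). Since Theorem~\ref{T:main} is an iff-characterization with two-sided constant equivalence, a one-sided comparison of the right-hand sides is not enough to transfer the characterization; in particular the sufficiency direction is unproven, and your proposed fix (the pointwise bound $f(t)\le(\int_t^b v)^{-1/p}\|f\|_{L^p(v)}$ and an exhaustion argument) is not a complete argument.

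The paper instead substitutes $f(x)=\big(\int_a^x h\big)^{1/p}$, which keeps $f$ non-decreasing and, crucially, makes the right-hand side transform \emph{exactly}: by Fubini, $\int_a^b f^p v=\int_a^b h(x)\big(\int_x^b v\big)dx$. After raising to the $p$-th power this is inequality \eqref{special}, which is \eqref{main-iterated} with parameters $p_{\mathrm{new}}=1$, $q_{\mathrm{new}}=1/p$, $r_{\mathrm{new}}=q/p$, weight $v_{\mathrm{new}}(t)=\int_t^b v$, and constant $C^p$. Because $p_{\mathrm{new}}=1$, the function $V_{p_{\mathrm{new}}}$ in \eqref{Vp} becomes $V_1(a,t)=\big(\int_t^b v\big)^{-1}$, and each $C_i$ of Theorem~\ref{T:main} turns into $\mathcal{C}_i^p$ after direct computation. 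The case dichotomies $p_{\mathrm{new}}\le q_{\mathrm{new}}$ and $p_{\mathrm{new}}\le r_{\mathrm{new}}$ read $1\le 1/p$ and $1\le q/p$, i.e.\ $p\le 1$ and $p\le q$, which are the same dichotomies you identified via the other route. The other direction (necessity of restricting to $f^p=\int_a^\cdot h$) is handled by pointwise monotone approximation from below. So the correct repair to your argument is simply to replace $f=\int_a^\cdot h$ by $f^p=\int_a^\cdot h$; this single change turns your one-sided estimate into an identity and eliminates the need for any boundary-correction or exhaustion argument.
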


Proofs of Theorem~\ref{T:main} and Theorem~\ref{Cor} will be given in  Section~\ref{S:Proofs}.

\section{Preliminary Results}

In this section, we cover the foundations of discretization as well as several new results that will be employed often throughout the proof of the main theorem.

\begin{defi}
Let $N \in \mathbb{Z} \cup \{-\infty\}$, $M \in \mathbb{Z} \cup \{+\infty\}$, $N<M$, and $\{a_k\}_{k=N}^M$ be a sequence of positive numbers. We say that $\{a_k\}_{k=N}^M$ is \textit{geometrically decreasing} if
\begin{equation*}
\sup\bigg\{\frac{a_{k+1}}{a_k},\quad N\leq k \leq M\bigg\} < 1.
\end{equation*}
\end{defi}

\begin{lem}\cite{GogPick2003}\label{L:dec-equiv-1}
Let $\alpha >0$ and $n \in \mathbb{Z}\cup\{-\infty\}$.  If $\{\tau_k\}_{k=n}^{\infty}$ is a  geometrically decreasing sequence,  then
\begin{equation}
\sup_{n\leq k<\infty} \tau_k \sum_{i=n}^k a_i  \approx \sup_{n\leq k <\infty} \tau_k a_k \label{sup-sum}
\end{equation} 
\begin{equation}
\sum_{k=n}^{\infty} \tau_k  \bigg(\sum_{i=n}^k a_i \bigg)^{\alpha}  \approx \sum_{k=n}^{\infty} \tau_k a_k^{\alpha}, \label{sum-sum}
\end{equation}
and
\begin{equation}
\sum_{k=n}^{\infty} \tau_k   \sup_{n \leq i \leq k} a_i  \approx \sum_{k=n}^{\infty} \tau_k  a_k, \label{sum-sup}
\end{equation}
for all non-negative sequences $\{a_k\}_{k=n}^{\infty} $.
\end{lem}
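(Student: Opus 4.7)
The plan is to exploit the defining geometric decay: writing $\lambda := \sup_k \tau_{k+1}/\tau_k \in (0,1)$, iteration gives
\[
\tau_k \leq \lambda^{k-i}\tau_i \quad \text{whenever } n\leq i\leq k.
\]
In all three claimed equivalences the direction $\gtrsim$ is trivial because $a_k$ appears among the terms of, or is dominated by, the inner sum/supremum on the left-hand side. Thus the content is the direction $\lesssim$, which in each case will follow from combining the displayed geometric estimate with elementary geometric-series manipulations.

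For \eqref{sup-sum} I would simply write
\[
\tau_k \sum_{i=n}^k a_i = \sum_{i=n}^k \tau_k a_i \leq \sum_{i=n}^k \lambda^{k-i}\tau_i a_i \leq \frac{1}{1-\lambda}\sup_{n\leq j<\infty}\tau_j a_j,
\]
and take the supremum over $k$.

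For \eqref{sum-sum} I would split on $\alpha$. When $0<\alpha\leq 1$, subadditivity of $t\mapsto t^\alpha$ gives $(\sum_{i=n}^k a_i)^\alpha \leq \sum_{i=n}^k a_i^\alpha$; then interchanging the order of summation and invoking $\sum_{k\geq i}\tau_k\leq (1-\lambda)^{-1}\tau_i$ finishes the case. The case $\alpha>1$ is the technical heart: pick an auxiliary parameter $\mu\in(1,\lambda^{-1})$ (possible precisely because $\lambda<1$) and apply Hölder's inequality with exponents $\alpha$ and $\alpha'$ to get
\[
\sum_{i=n}^k a_i = \sum_{i=n}^k \bigl(a_i\mu^{(k-i)/\alpha}\bigr)\,\mu^{-(k-i)/\alpha}\leq \biggl(\sum_{i=n}^k a_i^\alpha \mu^{k-i}\biggr)^{1/\alpha}\biggl(\sum_{i=n}^k \mu^{-(k-i)\alpha'/\alpha}\biggr)^{1/\alpha'}.
\]
The second factor is bounded by a $k$-independent constant since $\mu>1$. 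Raising to the $\alpha$-th power, multiplying by $\tau_k$, summing in $k$, and swapping the order of summation yields
\[
\sum_{k=n}^\infty \tau_k\bigg(\sum_{i=n}^k a_i\bigg)^\alpha \lesssim \sum_{i=n}^\infty a_i^\alpha \sum_{k=i}^\infty \tau_k \mu^{k-i} \leq \sum_{i=n}^\infty a_i^\alpha \tau_i \sum_{j=0}^\infty (\lambda\mu)^j,
\]
and the last geometric series converges thanks to $\lambda\mu<1$.

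Finally, \eqref{sum-sup} is immediate from \eqref{sum-sum} with $\alpha=1$, since $\sup_{n\leq i\leq k}a_i\leq \sum_{i=n}^k a_i$. The only genuinely technical point in the whole proof is the calibration of $\mu$ in the Hölder step: one needs \emph{both} $\mu>1$ (so the Hölder remainder factor is summable uniformly in $k$) \emph{and} $\lambda\mu<1$ (so the inner sum over $k$ collapses back to $\tau_i$), and the hypothesis $\lambda<1$ provides exactly the interval $(1,\lambda^{-1})$ needed to choose such a $\mu$.
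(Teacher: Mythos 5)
The paper cites this lemma from \cite{GogPick2003} and does not reproduce a proof, so there is nothing internal to compare against; your argument is a correct, self-contained proof and it is the standard one for such discretization lemmas. All three $\gtrsim$ directions are indeed trivial, the geometric estimate $\tau_k\leq\lambda^{k-i}\tau_i$ is applied correctly, the Hölder calibration with $\mu\in(1,\lambda^{-1})$ for $\alpha>1$ is exactly the classical device, and deriving \eqref{sum-sup} from the $\alpha=1$ case of \eqref{sum-sum} via $\sup_{n\leq i\leq k}a_i\leq\sum_{i=n}^k a_i$ is a clean shortcut. One small stylistic remark: you do not actually need the case split on $\alpha$ in \eqref{sum-sum} — the Hölder argument works for every $\alpha>1$, and for $0<\alpha\leq1$ one can instead apply \eqref{sum-sum} to the geometrically decreasing sequence $\{\tau_k^{1/\alpha}\}$ with exponent $1$ and then use $(\sum b_i)^\alpha\leq\sum b_i^\alpha$ only once at the end; but the split you use is equally standard and perfectly fine.
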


\begin{lem} \label{dec-equiv}
Let $\alpha >0$ and $n \in \mathbb{Z}\cup\{-\infty\}$.  Assume that $\{x_k\}_{k=n}^{\infty} $ is a strictly increasing sequence. If $\{\tau_k\}_{k=n}^{\infty}$ is a  geometrically decreasing sequence,  then,
\begin{equation}
\sup_{n \leq k <\infty} \tau_k \bigg( \int_{x_{n-1}}^ {x_k} g \bigg) \approx \sup_{n \leq k <\infty} \tau_k \bigg( \int_{x_{k-1}}^{x_{k}} g\bigg), \label{dec-sup-sum}
\end{equation}
\begin{equation}
\sum_{k=n}^{\infty} \tau_k  \bigg( \int_{x_{n-1}}^{x_k} g \bigg)^{\alpha}  \approx \sum_{k=n}^{\infty} \tau_k  \bigg( \int_{x_{k-1}}^{x_k} g \bigg)^{\alpha}, \label{dec-sum-sum}
\end{equation}
and
\begin{equation}
\sum_{k=n}^{\infty} \tau_k   \esup_{s \in (x_{n-1}, x_k)}  g(s)  \approx \sum_{k=n}^{\infty} \tau_k  \esup_{s \in (x_{k-1}, x_{k})}  g(s), \label{dec-sum-sup}
\end{equation}
for all non-negative measurable $g$ on $(x_{n-1}, \infty)$.
\end{lem}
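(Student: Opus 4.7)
The plan is to reduce each of the three equivalences to the corresponding discrete statement in Lemma~\ref{L:dec-equiv-1} by discretizing along the partition $\{x_i\}_{i=n-1}^{\infty}$. In all three cases, the point is to identify the right non-negative sequence $\{a_i\}$ so that Lemma~\ref{L:dec-equiv-1} applies mechanically.

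For \eqref{dec-sup-sum} and \eqref{dec-sum-sum}, I would set
\begin{equation*}
	a_i := \int_{x_{i-1}}^{x_i} g, \qquad i \geq n.
\end{equation*}
Since $\{x_k\}$ is strictly increasing, additivity of the integral gives $\int_{x_{n-1}}^{x_k} g = \sum_{i=n}^{k} a_i$ for every $k \geq n$. Plugging this into \eqref{sup-sum} yields
\begin{equation*}
	\sup_{n \leq k < \infty} \tau_k \int_{x_{n-1}}^{x_k} g \;=\; \sup_{n \leq k < \infty} \tau_k \sum_{i=n}^{k} a_i \;\approx\; \sup_{n \leq k < \infty} \tau_k\, a_k,
\end{equation*}
which is exactly \eqref{dec-sup-sum}. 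The same choice of $a_i$ in \eqref{sum-sum} gives \eqref{dec-sum-sum}.

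For \eqref{dec-sum-sup}, I would instead take
\begin{equation*}
	a_i := \esup_{s \in (x_{i-1}, x_i)} g(s), \qquad i \geq n.
\end{equation*}
Because the partition points $\{x_i\}$ form a set of measure zero, the interval $(x_{n-1}, x_k)$ decomposes (up to a null set) as the disjoint union $\bigcup_{i=n}^{k} (x_{i-1}, x_i)$, so
\begin{equation*}
	\esup_{s \in (x_{n-1}, x_k)} g(s) \;=\; \sup_{n \leq i \leq k} \esup_{s \in (x_{i-1}, x_i)} g(s) \;=\; \sup_{n \leq i \leq k} a_i.
\end{equation*}
Applying \eqref{sum-sup} to this sequence then gives \eqref{dec-sum-sup} at once.

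There is no real obstacle: once the correct sequences $\{a_i\}$ are identified, the lemma is a direct translation of Lemma~\ref{L:dec-equiv-1} from the discrete to the integral setting. The only small technical point to keep in mind is the null set consisting of the partition points when rewriting the essential supremum over $(x_{n-1}, x_k)$ as a supremum of essential suprema over the subintervals $(x_{i-1}, x_i)$.
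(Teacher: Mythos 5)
Your argument is correct and matches the paper's proof exactly: both decompose $\int_{x_{n-1}}^{x_k} g$ as $\sum_{i=n}^{k}\int_{x_{i-1}}^{x_i} g$ and $\esup_{s\in(x_{n-1},x_k)} g$ as $\sup_{n\le i\le k}\esup_{s\in(x_{i-1},x_i)} g$, then invoke \eqref{sup-sum}, \eqref{sum-sum}, and \eqref{sum-sup} from Lemma~\ref{L:dec-equiv-1}. The remark about the partition points forming a null set is a fine extra justification that the paper leaves implicit.
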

\begin{proof}
Assume that $\{x_k\}_{k=n}^{\infty} $ is a strictly increasing sequence. For each $n \in \mathbb{Z} \cup \{-\infty\}$, we can write 
$$
\int_{x_{n-1}}^{x_k} g = \sum_{i=n}^k \int_{x_{i-1}}^{x_i} g.
$$   
Then \eqref{dec-sup-sum} and \eqref{dec-sum-sum} are direct consequences of \eqref{sup-sum} and \eqref{sum-sum}, respectively. 

Similarly, for each $n \in \mathbb{Z} \cup \{-\infty\}$, we have
$$
\esup_{s \in (x_{n-1}, x_k)}  g(s) = \sup_{n \leq i \leq k} \esup_{s \in (x_{i-1}, x_i)}  g(s),
$$
so that applying \eqref{sum-sup}, we obtain \eqref{dec-sum-sup}.
\end{proof}

\begin{lem}
Let  $\alpha >0$ and $n \in \mathbb{Z}\cup\{-\infty\}$. Assume that $\{x_k\}_{k=n}^{\infty}$ is a strictly increasing sequence, $\{\tau_k\}_{k=n}^{\infty}$ is a geometrically decreasing sequence, and $\{\sigma_k\}_{k=n}^{\infty}$ is a positive non-decreasing sequence. Then 
\begin{equation}\label{3-sup-equiv}
\sup_{n+1 \leq k <\infty}  \tau_k  \sup_{n \leq i < k} \bigg(\int_{x_i}^{x_k} g \bigg)^{\alpha} \sigma_i \approx \sup_{n+1 \leq k <\infty} \tau_k  \bigg(\int_{x_{k-1}}^{x_k} g\bigg)^{\alpha} \sigma_{k-1}.
\end{equation}
and
\begin{equation}\label{3-sum-equiv}
\sum_{k=n+1}^{\infty} \tau_k  \sup_{n \leq i < k} \bigg(\int_{x_i}^{x_k} g\bigg)^{\alpha} \sigma_i \approx \sum_{k=n+1}^{\infty} \tau_k  \bigg(\int_{x_{k-1}}^{x_k} g\bigg)^{\alpha} \sigma_{k-1}.
\end{equation}
hold for all non-negative measurable $g$ on $(x_n, \infty)$.
\end{lem}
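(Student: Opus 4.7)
The lower bound $\gtrsim$ in both \eqref{3-sup-equiv} and \eqref{3-sum-equiv} is immediate: bounding the inner supremum below by its value at $i = k-1$ produces the right-hand side. For the reverse direction I would set $A_j := \int_{x_{j-1}}^{x_j} g$ and $T_j := \tau_j \sigma_{j-1} A_j^\alpha$, so that $\int_{x_i}^{x_k} g = \sum_{j=i+1}^k A_j$. Let $c \in (0,1)$ satisfy $\tau_{k+1} \leq c\, \tau_k$, so $\tau_k/\tau_j \leq c^{k-j}$ whenever $j \leq k$. The whole plan rests on establishing a single pointwise estimate
\begin{equation*}
\tau_k \sigma_i \bigg(\sum_{j=i+1}^k A_j\bigg)^\alpha \lesssim \sum_{j=i+1}^k c^{(k-j)\gamma} T_j, \qquad n \leq i < k,
\end{equation*}
for some exponent $\gamma = \gamma(\alpha, c) \in (0,1]$, from which both claims will follow by routine manipulations.

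To prove this pointwise estimate I would split into two cases. If $0 < \alpha \leq 1$, the subadditivity $(\sum_j A_j)^\alpha \leq \sum_j A_j^\alpha$, combined with the monotonicity $\sigma_i \leq \sigma_{j-1}$ for $j \geq i+1$ and with $\tau_k \leq c^{k-j} \tau_j$, directly yields the estimate with $\gamma = 1$. If $\alpha > 1$, subadditivity fails and I would instead apply H\"older's inequality with weights $b_j := (\tau_j/\tau_k)^\theta$ for a fixed $\theta \in (0,1)$:
\begin{equation*}
\bigg(\sum_{j=i+1}^k A_j\bigg)^\alpha \leq \bigg(\sum_{j=i+1}^k A_j^\alpha b_j\bigg) \bigg(\sum_{j=i+1}^k b_j^{-1/(\alpha-1)}\bigg)^{\alpha-1}.
\end{equation*}
The second factor is bounded by the convergent geometric series $\sum_{m \geq 0} c^{m\theta/(\alpha-1)}$. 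Multiplying through by $\tau_k \sigma_i$, pushing in $\sigma_i \leq \sigma_{j-1}$, and recognising $\tau_j \sigma_{j-1} A_j^\alpha = T_j$ then delivers the pointwise bound with $\gamma = 1-\theta$.

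Given the pointwise estimate, \eqref{3-sup-equiv} follows by taking the supremum over $i$ (which just enlarges $\sum_{j=i+1}^k$ to $\sum_{j=n+1}^k$) and then the supremum over $k$, using $\sum_{j=n+1}^k c^{(k-j)\gamma} T_j \leq (1 - c^\gamma)^{-1} \sup_{j \geq n+1} T_j$. For \eqref{3-sum-equiv} I would instead sum over $k$ after the same supremum over $i$ and swap the order of summation:
\begin{equation*}
\sum_{k=n+1}^\infty \sum_{j=n+1}^k c^{(k-j)\gamma} T_j = \sum_{j=n+1}^\infty T_j \sum_{m=0}^\infty c^{m\gamma} = (1-c^\gamma)^{-1} \sum_{j=n+1}^\infty T_j,
\end{equation*}
which is the right-hand side up to a constant. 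The main obstacle is the case $\alpha > 1$: subadditivity is unavailable and one has to choose the H\"older weight exponent $\theta \in (0,1)$ so that both factors are controlled, namely $\sum_j b_j^{-1/(\alpha-1)}$ forms a convergent geometric series while enough decay $(\tau_k/\tau_j)^{1-\theta} \leq c^{(k-j)(1-\theta)}$ remains to absorb the outer $k$-summation; any $\theta \in (0,1)$ does the job.
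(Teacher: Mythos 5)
Your proof is correct, but it takes a genuinely different route from the paper's. The paper first absorbs the weight $\sigma_i$ into the integrand via $\sigma_i^{1/\alpha}\int_{x_i}^{x_k}g \le \sum_{j=i}^{k-1}\sigma_j^{1/\alpha}\int_{x_j}^{x_{j+1}}g$ (using monotonicity of $\sigma$), takes the supremum over $i$ (which just extends the sum down to $j=n$), and then invokes the already-established equivalences \eqref{dec-sup-sum} and \eqref{sum-sum} from Lemmas~\ref{L:dec-equiv-1}--\ref{dec-equiv} to collapse the full sum to its top block; for \eqref{3-sup-equiv} it instead interchanges the two suprema and applies \eqref{dec-sup-sum} directly. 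That argument is short and handles all $\alpha>0$ uniformly because the case analysis is hidden inside the cited lemma. You instead prove a pointwise inequality $\tau_k\sigma_i\big(\sum_{j=i+1}^k A_j\big)^\alpha \lesssim \sum_{j=i+1}^k c^{(k-j)\gamma}T_j$ from scratch — subadditivity of $t\mapsto t^\alpha$ when $\alpha\le 1$, H\"older with geometric weights $b_j=(\tau_j/\tau_k)^\theta$ when $\alpha>1$ — and then sum the resulting geometric kernel. In effect you are re-deriving the content of \cite[Lemma~2.1]{GogPick2003} in this specific setting rather than citing it; the price is the explicit $\alpha\lessgtr 1$ split, the gain is a self-contained argument that does not depend on the earlier preliminary lemmas. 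Both routes are sound and give the stated two-sided bounds with constants depending only on $\alpha$ and the ratio bound $c$ of $\{\tau_k\}$.
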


\begin{proof}
Let us start with the equivalency \eqref{3-sup-equiv}. Since $\{\tau_k\}_{k=n}^{\infty}$ is a geometrically decreasing sequence, interchanging supremum and \eqref{dec-sup-sum} give 
\begin{equation*}
LHS\eqref{3-sup-equiv} = \sup_{n \leq i <\infty} \sigma_i \sup_{i+1 \leq k <\infty} \tau_k \bigg(\int_{x_i}^{x_k} g\bigg)^{\alpha} \approx    \sup_{n \leq i<\infty} \sigma_i \sup_{i+1 \leq k<\infty} \tau_k \bigg(\int_{x_{k-1}}^{x_k} g\bigg)^{\alpha}.
\end{equation*}
Interchanging supremum once again and monotonicity of $\{\sigma_k\}_{k=n}^{\infty}$ results in 
\begin{equation*}
LHS\eqref{3-sup-equiv} \approx \sup_{n+1 \leq k<\infty} \tau_k  \bigg(\int_{x_{k-1}}^{x_k} g\bigg)^{\alpha} \sup_{n\leq i \leq k-1} \sigma_i = RHS\eqref{3-sup-equiv}. 
\end{equation*}

Let us now tackle \eqref{3-sum-equiv}. Monotonicity of $\{\sigma_k\}_{k=n}^{\infty}$ gives that
\begin{align*}
LHS\eqref{3-sum-equiv} \leq \sum_{k=n+1}^{\infty} \tau_k  \sup_{n\leq i < k} \bigg(\sum_{j=i}^{k-1}  \sigma_j^{\frac{1}{\alpha}}  \int_{x_j}^{x_{j+1}} g \bigg)^{\alpha} = \sum_{k=n+1}^{\infty} \tau_k  \bigg(\sum_{j=n}^{k-1}  \sigma_j^{\frac{1}{\alpha}}  \int_{x_j}^{x_{j+1}} g \bigg)^{\alpha}.
\end{align*}
Then, using \eqref{sum-sum}, we have the following upper estimate
\begin{align*}
LHS\eqref{3-sum-equiv} \leq \sum_{k=n+1}^{\infty} \tau_k  \bigg(\sum_{j=n+1}^{k}  \sigma_{j-1}^{\frac{1}{\alpha}} \int_{x_{j-1}}^{x_{j}} g \bigg)^{\alpha} \approx RHS\eqref{3-sum-equiv}.
\end{align*}
On the other hand, the reverse estimate is clear and the proof is complete. 
\end{proof}


Let $w$ be a non-negative measurable function on $(a,b)$, denote by
\begin{equation*}
    W^*(t) = \int_t^b w(s)  \,ds, \quad t\in [a,b].
\end{equation*}

\begin{defi}
Let $w$ be a non-negative measurable function on $(a,b)$. A strictly increasing sequence  $\{x_k\}_{k=N}^{\infty}\subset [a,b]$ is said to be a discretizing sequence of the function $ W^*$, if it satisfies $ W^*(x_k)  \approx 2^{-k}$, $N \leq k <  \infty $. If $N > -\infty$ then $x_{N} : =a$, otherwise  $x_{-\infty}:= \lim_{k\rightarrow -\infty} x_k   = a$.	
\end{defi}

It is worth noting that if $N=-\infty$, then $N+1$ is also $-\infty$.

\begin{lem}
Let $\alpha \geq 0$ and $N \in \mathbb{Z}\cup \{-\infty\}$. Assume that $w$ is a weight on $[a, b]$ and $\{x_k\}_{k=N}^{\infty}$ is a discretizing sequence of the function $W^*$. Then for any $n \colon N\leq n$, 
\begin{equation}\label{int.equiv}
\int_{x_n}^b W^*(x)^{\alpha} w(x) h(x)  dx \approx \sum_{k=n+1}^{\infty} 2^{-k(\alpha +1)} h(x_k)
\end{equation}
and
\begin{equation}\label{sup.equiv}
\esup_{x \in (x_n,b)}  W^*(x)^{\alpha} h(x) \approx \sup_{n+1\leq k} 2^{-k{\alpha}} h(x_k)
\end{equation}
hold for all non-negative and non-decreasing $h$ on $(a, b)$. 
\end{lem}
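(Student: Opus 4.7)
The plan is to split the integral (respectively the essential supremum) over $(x_n, b)$ at the discretization nodes $x_k$, use the defining property $W^*(x_k) \approx 2^{-k}$ to estimate $W^*(x)^\alpha$ throughout each subinterval $(x_{k-1}, x_k)$, and then exploit the monotonicity of $h$ together with a single index shift.

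For \eqref{int.equiv} I would begin with the decomposition
$$\int_{x_n}^{b} W^*(x)^\alpha w(x) h(x)\,dx \;=\; \sum_{k=n+1}^{\infty}\int_{x_{k-1}}^{x_k} W^*(x)^\alpha w(x) h(x)\,dx.$$
On each subinterval, the monotonicity of $W^*$ combined with $W^*(x_k)\approx 2^{-k}$ gives $W^*(x)^\alpha \approx 2^{-k\alpha}$, and the identity $\int_{x_{k-1}}^{x_k} w = W^*(x_{k-1}) - W^*(x_k)$ together with the definition of the discretizing sequence yields $\int_{x_{k-1}}^{x_k} w \approx 2^{-k}$. Using the monotonicity sandwich $h(x_{k-1}) \le h(x) \le h(x_k)$ then produces the two-sided estimate
$$2^{-k(\alpha+1)} h(x_{k-1}) \;\lesssim\; \int_{x_{k-1}}^{x_k} W^*(x)^\alpha w(x) h(x)\,dx \;\lesssim\; 2^{-k(\alpha+1)} h(x_k).$$
Summing over $k \geq n+1$ delivers the upper bound of \eqref{int.equiv} directly, while for the lower bound I would use the shift of index
$$\sum_{k=n+1}^{\infty} 2^{-k(\alpha+1)} h(x_{k-1}) \;=\; 2^{-(\alpha+1)} \sum_{j=n}^{\infty} 2^{-j(\alpha+1)} h(x_j) \;\geq\; 2^{-(\alpha+1)} \sum_{k=n+1}^{\infty} 2^{-k(\alpha+1)} h(x_k),$$
closing the equivalence up to a constant depending only on $\alpha$.

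For \eqref{sup.equiv} the argument runs in parallel: from
$$\esup_{x\in(x_n,b)} W^*(x)^\alpha h(x) \;=\; \sup_{k\ge n+1}\; \esup_{x\in(x_{k-1},x_k)} W^*(x)^\alpha h(x),$$
together with $W^*(x)^\alpha \approx 2^{-k\alpha}$ on each piece and monotonicity of $h$, I obtain the subinterval bounds $2^{-k\alpha} h(x_{k-1}) \lesssim \esup_{(x_{k-1},x_k)} W^*(x)^\alpha h(x) \lesssim 2^{-k\alpha} h(x_k)$. Taking the supremum and then using the index shift
$$\sup_{k\ge n+1} 2^{-k\alpha} h(x_k) \;=\; 2^\alpha \sup_{m\ge n+2} 2^{-m\alpha} h(x_{m-1}) \;\le\; 2^\alpha \sup_{k\ge n+1} 2^{-k\alpha} h(x_{k-1})$$
again converts the lower estimate in terms of $h(x_{k-1})$ into one comparable to the target sum in terms of $h(x_k)$.

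The only genuinely delicate point is ensuring $W^*(x_{k-1}) - W^*(x_k) \gtrsim 2^{-k}$ for the lower bound in \eqref{int.equiv}: this does not follow automatically from arbitrary equivalence constants in $W^*(x_k) \approx 2^{-k}$, and would be handled by the standard device of choosing $\{x_k\}$ canonically (for instance, the smallest point where $W^*$ first drops below $2^{-k}$), after which every remaining step is bookkeeping and the shift-of-index lemma above.
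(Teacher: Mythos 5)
Your proposal is correct and follows essentially the same route as the paper: split at the nodes $x_k$, use monotonicity of $h$ to replace $h$ by its value at an endpoint on each piece, and identify the weight mass on each piece with $2^{-k(\alpha+1)}$. Two cosmetic differences: the paper compresses the estimate of $\int_{x_{k-1}}^{x_k} W^*(x)^\alpha w(x)\,dx$ by observing $W^*(x)^\alpha w(x)\,dx = -\tfrac{1}{\alpha+1}\,d\big[W^*(x)^{\alpha+1}\big]$, rather than separately estimating $W^*(x)^\alpha\approx 2^{-k\alpha}$ and $\int_{x_{k-1}}^{x_k}w\approx 2^{-k}$ as you do; and for the lower bound the paper drops the first term and integrates over $(x_k,x_{k+1})$ so as to use $h\ge h(x_k)$ directly, whereas you stay on $(x_{k-1},x_k)$, use $h\ge h(x_{k-1})$, and re-index the sum afterwards. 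Both devices give the same result with the same constants, and the same applies to the essential-supremum variant, where you are in fact slightly more careful than the paper (which writes $\esup_{(x_{k-1},x_k)}h = h(x_k)$ rather than handling the one-sided inequalities via an index shift). Your flag about the quantitative gap $W^*(x_{k-1})-W^*(x_k)\gtrsim 2^{-k}$ is legitimate: it is equally implicit in the paper's differential-form computation, and both proofs rest on the standard convention for constructing the discretizing sequence so that the gap holds.
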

\begin{proof}
Let $\{x_k\}_{k=N}^{\infty}$ be a discretizing sequence of the function $ W^*$. Monotonicity of $h$ and properties of the discretizing sequence $\{x_k\}_{k=N}^{\infty}$ yield
\begin{align*}
LHS\eqref{int.equiv} &= \sum_{k=n+1}^{\infty} \int_{x_{k-1}}^{x_k} h(x)  W^*(x)^{\alpha} w(x) dx  \lesssim \sum_{k=n+1}^{\infty} h(x_k) \int_{x_{k-1}}^{x_k}  d\Big[-W^*(x)^{\alpha+1} \Big] \\
& \approx \sum_{k=n+1}^{\infty} 2^{-k(\alpha+1)} h(x_k) = RHS\eqref{int.equiv} , 
\end{align*}
and, conversely
\begin{align*}
LHS\eqref{int.equiv} & \geq \sum_{k=n+1}^{\infty} \int_{x_k}^{x_{k+1}} h(x) W^*(x)^{\alpha} w(x) dx  \gtrsim \sum_{k=n+1}^{\infty} h(x_k) \int_{x_k}^{x_{k+1}}  d\Big[-W^*(x)^{\alpha+1} \Big] \\
& \approx \sum_{k=n+1}^{\infty} 2^{-k(\alpha+1)}  h(x_k) = RHS\eqref{int.equiv}.
\end{align*}
Thus, \eqref{int.equiv} holds. 

On the other hand, similarly, 
\begin{align*}
LHS\eqref{sup.equiv} =	\sup_{n+1 \leq k < \infty} \esup_{x \in (x_{k-1},x_k)}  W^*(x)^{\alpha} h(x) \approx \sup_{n+1 \leq k < \infty} 2^{-k\alpha} \esup_{x \in (x_{k-1},x_k)}  h(x) = RHS\eqref{sup.equiv}
\end{align*}
holds.
\end{proof}

\section{Discrete Characterization}\label{Disc.Char.}

We begin this section by observing that inequality \eqref{main-iterated} is equivalent to two other discrete inequalities, and we present the characterization in discrete form, which is noteworthy on its own.

Let us start with the discretization of inequality \eqref{main-iterated}. To this end we need the following notations: first denote by $B(x_{k-1}, x_k)$ the best constant of weighted Hardy inequality, that is, \begin{equation}\label{B(a,b)}
	B(x_{k-1},x_k) := \sup_{h\in  \mathfrak{M}^+(x_{k-1},x_k)} \frac{\bigg(\int_{x_{k-1}}^{x_k} \bigg(\int_{x_{k-1}}^t h(s) ds \bigg)^q u(t) dt \bigg)^{\frac{1}{q}}}{\bigg(\int_{x_{k-1}}^{x_k} h(t)^p v(t) dt\bigg)^{\frac{1}{p}}}
\end{equation}
and using the classical characterizations of weighted Hardy inequalities (see, \cite{KufPerSam,SinStep}), we have 
for $1\leq p <\infty, 0<q < \infty$
\begin{equation}\label{B-char.}
B(x_{k-1}, x_k) \approx\begin{cases}
\displaystyle \esup\limits_{t \in (x_{k-1}, x_{k})} \bigg(\int_t^{x_{k}} u \bigg)^{\frac{1}{q}} V_p(x_{k-1}, t) &\text{if} \quad p \leq q,\\
\\
\displaystyle 
\bigg(\int_{x_{k-1}}^{x_{k}} \bigg(\int_t^{x_{k}} u \bigg)^{\frac{q}{p-q}} u(t) V_p(x_{k-1}, t)^{\frac{pq}{p-q}}  dt \bigg)^{\frac{p-q}{pq}}  &\text{if} \quad q <  p.        \end{cases}
\end{equation}

Furthermore, observe that $V_p(x_{k-1}, x_k)$, $N+1 \leq k$ defined in \eqref{Vp} can be expressed as
\begin{eqnarray} \label{Vp-def}
 \sup_{g \in  \mathfrak{M}^+(x_{k-1}, x_k)} \frac{\int_{x_{k-1}}^{x_k} g(t) dt} {\bigg(\int_{x_{k-1}}^{x_k} g(t)^p v(t) dt \bigg)^{\frac{1}{p}}} = V_p(x_{k-1}, x_k).
\end{eqnarray}

\begin{lem} \label{T:equiv.ineq.-0}
Let $1\leq   p <\infty$, $0 < q, r < \infty$ and let $u, v, w$ be weights on $(a,b)$.   Assume that $\{x_k\}_{k=N}^{\infty}\subset [a,b]$ is a discretizing sequence of the function $W^*$. Then, it is clear that there exists a positive constant $C$ such that inequality \eqref{main-iterated} holds for all $f \in \mathfrak{M}^+(a,b)$ if and only if there exist positive constants $\mathcal{C}'$ and $\mathcal{C}''$ such that
\begin{equation}\label{main-iterated-1}
\bigg( \sum_{k=N+1}^{\infty}  2^{-k} \bigg( \int_{x_{k-1}}^{x_k} \bigg(\int_{x_{k-1}}^t f \bigg)^q u(t) dt \bigg)^{\frac{r}{q}}  \bigg)^{\frac{1}{r}} \leq \mathcal{C}' \bigg( \sum_{k=N+1}^{\infty} \int_{x_{k-1}}^{x_k} f^p v\bigg)^{\frac{1}{p}}
\end{equation}	
and
\begin{equation} \label{main-iterated-2}
\bigg( \sum_{k=N+1}^{\infty}  2^{-k}   \bigg( \int_a^{x_k} f \bigg)^r \bigg( \int_{x_k}^{x_{k+1}}  u \bigg)^{\frac{r}{q}}  \bigg)^{\frac{1}{r}}  \leq \mathcal{C}'' \bigg(\sum_{k=N+1}^{\infty} \int_{x_{k-1}}^{x_k} f^p v \bigg)^{\frac{1}{p}}
\end{equation}	
hold for all $f \in \mathfrak{M}^+(a,b)$. Moreover, $C \approx \mathcal{C}' + \mathcal{C}'' $.
\end{lem}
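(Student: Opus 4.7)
The plan is to show that $\LHS\eqref{main-iterated}^r$ is equivalent, up to multiplicative constants independent of $f$, to the sum $\LHS\eqref{main-iterated-1}^r + \LHS\eqref{main-iterated-2}^r$. Since the right-hand sides of all three inequalities coincide via $\int_a^b f^p v = \sum_k \int_{x_{k-1}}^{x_k} f^p v$, this at once yields $C \approx \mathcal{C}' + \mathcal{C}''$: if \eqref{main-iterated-1} and \eqref{main-iterated-2} hold with constants $\mathcal{C}'$ and $\mathcal{C}''$, then $\LHS\eqref{main-iterated} \lesssim \mathcal{C}' + \mathcal{C}''$ because $(A^r + B^r)^{1/r} \leq A + B$; conversely, each of $\LHS\eqref{main-iterated-1}$ and $\LHS\eqref{main-iterated-2}$ is dominated by $\LHS\eqref{main-iterated}$.

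Setting $G(x) := \int_a^x \bigl(\int_a^t f\bigr)^q u(t)\, dt$, the first step is to apply \eqref{int.equiv} with $\alpha = 0$ to the non-decreasing function $G^{r/q}$, obtaining
\begin{equation*}
\LHS\eqref{main-iterated}^r = \int_a^b G(x)^{r/q} w(x)\, dx \approx \sum_{k = N+1}^{\infty} 2^{-k} G(x_k)^{r/q}.
\end{equation*}
Next I would split $G(x_k) = \sum_{j = N+1}^{k} \int_{x_{j-1}}^{x_j} \bigl(\int_a^t f\bigr)^q u(t)\, dt$, and for $t \in (x_{j-1}, x_j)$ write $\int_a^t f = \int_a^{x_{j-1}} f + \int_{x_{j-1}}^t f$. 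The scalar equivalence $(A + B)^{\alpha} \approx A^{\alpha} + B^{\alpha}$ for $A, B \geq 0$ and $\alpha > 0$, applied first inside the integral with $\alpha = q$ and then outside with $\alpha = r/q$, gives
\begin{equation*}
G(x_k)^{r/q} \approx \bigg(\sum_{j = N+1}^{k} \bigg(\int_a^{x_{j-1}} f\bigg)^{\!q} \int_{x_{j-1}}^{x_j} u\bigg)^{\!r/q} + \bigg(\sum_{j = N+1}^{k} \int_{x_{j-1}}^{x_j} \bigg(\int_{x_{j-1}}^t f\bigg)^{\!q} u(t)\, dt\bigg)^{\!r/q}.
\end{equation*}

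The final step is to insert this into $\sum_k 2^{-k} G(x_k)^{r/q}$ and apply \eqref{sum-sum} with the geometrically decreasing sequence $\tau_k = 2^{-k}$ and exponent $r/q$ to each of the two resulting summations. The second collapses directly to $\LHS\eqref{main-iterated-1}^r$. The first collapses to $\sum_{k \geq N+1} 2^{-k} \bigl(\int_a^{x_{k-1}} f\bigr)^r \bigl(\int_{x_{k-1}}^{x_k} u\bigr)^{r/q}$; a shift of index $k \mapsto k+1$, using $2^{-(k+1)} \approx 2^{-k}$ and the fact that the boundary term satisfies $\int_a^{x_N} f = 0$ when $N > -\infty$, converts this expression into $\LHS\eqref{main-iterated-2}^r$. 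I expect the main point requiring care to be preserving bidirectionality of every $\approx$ throughout the chain, since the target equivalence $C \approx \mathcal{C}' + \mathcal{C}''$ depends on having both directions; fortunately each tool employed -- \eqref{int.equiv}, the scalar equivalence $(A + B)^\alpha \approx A^\alpha + B^\alpha$, and \eqref{sum-sum} -- is two-sided, so the argument reduces to careful bookkeeping.
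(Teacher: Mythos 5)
Your proposal is correct and follows essentially the same route as the paper: discretize the outer integral via \eqref{int.equiq} (their \eqref{int.equiv}) with $\alpha=0$, decompose $\int_a^t f = \int_a^{x_{j-1}}f + \int_{x_{j-1}}^t f$ using the scalar equivalence $(A+B)^\alpha\approx A^\alpha+B^\alpha$, and collapse the resulting telescoping sums with \eqref{sum-sum}. The only cosmetic difference is order of operations -- the paper first applies \eqref{dec-sum-sum} to replace $\int_a^{x_k}$ by $\int_{x_{k-1}}^{x_k}$ and then splits the innermost integral, whereas you split first and then apply \eqref{sum-sum} to each piece; since \eqref{dec-sum-sum} is just \eqref{sum-sum} written for integrals, these are the same argument.
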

\begin{proof}
    Let $\{x_k\}_{k=N}^{\infty}$ be the discretizing sequence of the function $W^*$. Applying \eqref{int.equiv} with $\alpha =0$, we have that
\begin{equation*}
LHS\eqref{main-iterated} \approx \bigg( \sum_{k=N+1}^{\infty}  2^{-k} \bigg(\int_a^{x_k} \bigg(\int_a^t f \bigg)^q u(t) dt \bigg)^{\frac{r}{q}} \bigg)^{\frac{1}{r}}.
\end{equation*}
Since, $\{2^{-k}\}$ is geometrically decreasing, using \eqref{dec-sum-sum}, we obtain that
\begin{align*}
LHS\eqref{main-iterated} & \approx \bigg(  \sum_{k=N+1}^{\infty}   2^{-k} \bigg( \int_{x_{k-1}}^{x_k} \bigg(\int_a^t f\bigg)^q u(t) dt \bigg)^{\frac{r}{q}} \bigg)^{\frac{1}{r}} \\
& \approx \bigg( \sum_{k=N+1}^{\infty}    2^{-k} \bigg( \int_{x_{k-1}}^{x_k} \bigg(\int_{x_{k-1}}^t f\bigg)^q u(t) dt \bigg)^{\frac{r}{q}} \bigg)^{\frac{1}{r}} \\
&\quad + \bigg(  \sum_{k=N+2}^{\infty}    2^{-k} \bigg(\int_a^{x_{k-1}} f \bigg)^r \bigg( \int_{x_{k-1}}^{x_k}  u\bigg)^{\frac{r}{q}} \bigg)^{\frac{1}{r}}.
\end{align*}	
Then, it is clear that there exists a positive constant $C$ such that inequality \eqref{main-iterated} holds for all $f \in \mathfrak{M}^+(a,b)$ if and only if there exist positive constants $\mathcal{C}'$ and $\mathcal{C}''$ such that 
\eqref{main-iterated-1} and \eqref{main-iterated-2}
hold for all $f \in \mathfrak{M}^+(a,b)$. Moreover, $C \approx \mathcal{C}' + \mathcal{C}'' $.
\end{proof}

\begin{lem}\label{T:equiv.ineq.-1}
Let $1\leq   p <\infty$, $0 < q, r < \infty$ and let $u, v, w$ be weights on $(a,b)$.   Assume that $\{x_k\}_{k=N}^{\infty}\subset [a,b]$ is a discretizing sequence of the function $W^*$. Then there exists a positive constant $\mathcal{C}'$ such that inequality \eqref{main-iterated-1}
holds for all $f \in \mathfrak{M}^+(a,b)$ if and only if there exist positive constants $C'$ such that
\begin{equation}\label{Bk-inequality}
\bigg(\sum_{k=N+1}^{\infty} 2^{-k} a_k^r B(x_{k-1},x_{k})^r \bigg)^{\frac{1}{r}} \leq C'  \bigg(\sum_{k=N+1}^{\infty} a_k^p \bigg)^{\frac{1}{p}},
\end{equation}
holds for every sequence of non-negative numbers $\{a_k\}_{k=N+1}^{\infty}$. Moreover the best constants $\mathcal{C}'$ and $C'$, respectively, in \eqref{main-iterated-1} and \eqref{Bk-inequality} satisfies $\mathcal{C}' \approx C'$.
\end{lem}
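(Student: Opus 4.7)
The plan is to prove the equivalence by two natural reductions: obtain the discrete inequality from the continuous one by testing, and the continuous one from the discrete by gluing together near-extremizers on consecutive intervals. The constants will then obviously satisfy $\mathcal{C}' \approx C'$ with universal implicit constants.

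For the implication \eqref{main-iterated-1}$\Rightarrow$\eqref{Bk-inequality}, given a non-negative sequence $\{a_k\}_{k=N+1}^{\infty}$, I first note that we may assume $B(x_{k-1}, x_k)<\infty$ for every $k$: otherwise, localising a test function to a single interval $(x_{k-1},x_k)$ in \eqref{main-iterated-1} would force $B(x_{k-1},x_k)\leq 2^{k/r}\mathcal{C}'<\infty$, a contradiction. Fix $\varepsilon\in(0,1)$. For each $k$, by \eqref{B(a,b)} and \eqref{Vp-def}, choose $f_k\in\mathfrak{M}^+(x_{k-1},x_k)$ with $\bigl(\int_{x_{k-1}}^{x_k} f_k^p v\bigr)^{1/p}=a_k$ and
\begin{equation*}
\bigg(\int_{x_{k-1}}^{x_k}\bigg(\int_{x_{k-1}}^t f_k\bigg)^q u(t)\,dt\bigg)^{1/q}\geq (1-\varepsilon) B(x_{k-1},x_k)\,a_k.
\end{equation*}
Define $f:=\sum_{k=N+1}^{\infty} f_k\,\chi_{(x_{k-1},x_k)}$. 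Because the supports are pairwise disjoint, $\int_{x_{k-1}}^t f=\int_{x_{k-1}}^t f_k$ for $t\in(x_{k-1},x_k)$ and $\int_{x_{k-1}}^{x_k} f^p v=a_k^p$. Substituting into \eqref{main-iterated-1} yields
\begin{equation*}
(1-\varepsilon)\bigg(\sum_{k=N+1}^{\infty} 2^{-k}\,B(x_{k-1},x_k)^r\,a_k^r\bigg)^{1/r}\leq \mathcal{C}'\bigg(\sum_{k=N+1}^{\infty} a_k^p\bigg)^{1/p},
\end{equation*}
and letting $\varepsilon\to 0^+$ gives \eqref{Bk-inequality} with $C'\leq\mathcal{C}'$.

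For the reverse implication \eqref{Bk-inequality}$\Rightarrow$\eqref{main-iterated-1}, given $f\in\mathfrak{M}^+(a,b)$ set
\begin{equation*}
a_k:=\bigg(\int_{x_{k-1}}^{x_k} f^p v\bigg)^{1/p},\qquad k\geq N+1.
\end{equation*}
The definition \eqref{B(a,b)} of $B(x_{k-1},x_k)$, applied to the restriction of $f$ to $(x_{k-1},x_k)$, yields
\begin{equation*}
\bigg(\int_{x_{k-1}}^{x_k}\bigg(\int_{x_{k-1}}^t f\bigg)^q u(t)\,dt\bigg)^{1/q}\leq B(x_{k-1},x_k)\,a_k.
\end{equation*}
Raising to the $r$-th power, multiplying by $2^{-k}$, summing in $k$, taking the $r$-th root, and invoking \eqref{Bk-inequality} produces
\begin{equation*}
\text{LHS}\eqref{main-iterated-1}\leq \bigg(\sum_{k=N+1}^{\infty} 2^{-k} B(x_{k-1},x_k)^r a_k^r\bigg)^{1/r}\leq C'\bigg(\sum_{k=N+1}^{\infty} a_k^p\bigg)^{1/p}=C'\,\text{RHS}\eqref{main-iterated-1},
\end{equation*}
which gives $\mathcal{C}'\leq C'$.

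The argument is almost purely formal: the only point requiring care is the preliminary observation that finiteness of either constant forces each local Hardy constant $B(x_{k-1},x_k)$ to be finite, so that near-extremizers in the second paragraph exist and the right-hand sides of the two inequalities are simultaneously finite or infinite. No parameter restriction on $p,q,r$ enters, because the interaction between consecutive intervals is decoupled once the supports are made disjoint.
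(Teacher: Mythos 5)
Your proof is correct and follows essentially the same route as the paper's: in the forward direction you test with a disjoint sum of near-extremizers for the local Hardy constants $B(x_{k-1},x_k)$, and in the reverse direction you set $a_k=(\int_{x_{k-1}}^{x_k}f^p v)^{1/p}$ and use the defining property of $B(x_{k-1},x_k)$. The only cosmetic differences are that the paper normalizes each $h_k$ by $\int h_k^p v=1$ and scales by $a_k$ afterwards (rather than normalizing directly to $a_k$), and that you make the $\varepsilon$-approximation and the a priori finiteness of each $B(x_{k-1},x_k)$ explicit, which the paper leaves implicit. (One slip: the reference to \eqref{Vp-def} in the forward direction is spurious; only \eqref{B(a,b)} is used there.)
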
	

\begin{proof}
 Assume that \eqref{main-iterated-1} holds. By the definition of $B(x_{k-1}, x_k)$, there exist non-negative measurable functions $h_k$, $N+1 \leq k$ on $(a, b)$ such that 
$$
\supp h_k \subset [x_{k-1}, x_k], \quad \int_{x_{k-1}}^{x_k} h_k^p v = 1, \quad  \bigg(\int_{x_{k-1}}^{x_k} \bigg(\int_{x_{k-1}}^t h_k \bigg)^q u(t) dt\bigg)^{\frac{1}{q}} \gtrsim B(x_{k-1}, x_k).
$$
Thus, inserting $h = \sum_{m=N+1}^{\infty}  a_m h_m$,  where $\{a_m\}_{m=N+1}^{\infty}$ is any sequence of non-negative numbers, into \eqref{main-iterated-1}, \eqref{Bk-inequality} follows. Moreover, $C' \lesssim \mathcal{C}'$

Conversely, observe first that for each $h\in \mathcal{M}^+(x_{k-1},x_k)$
\begin{equation*}
    B(x_{k-1},x_k) \geq \bigg(\int_{x_{k-1}}^{x_k} \bigg(\int_{x_{k-1}}^t h(s) ds \bigg)^q u(t) dt \bigg)^{\frac{1}{q}} \bigg(\int_{x_{k-1}}^{x_k} h(t)^p v(t) dt\bigg)^{-\frac{1}{p}}.
\end{equation*}
Then \eqref{main-iterated-1} follows by, inserting  $a_k = (\int_{x_{k-1}}^{x_k} h^pv)^{\frac{1}{p}}$ in \eqref{Bk-inequality} and $\mathcal{C}' \leq C'$. Further, we have $\mathcal{C}' \approx C'$.
\end{proof}

\begin{lem}\label{T:equiv.ineq.-2}
Let $1 \leq p <\infty$, $0 < q, r < \infty$ and let $u, v, w$ be weights on $(a,b)$.   Assume that $\{x_k\}_{k=N}^{\infty}\subset [a,b]$ is a discretizing sequence of the function $W^*$. Then there exists a positive constant $\mathcal{C}''$ such that inequality \eqref{main-iterated-2} holds for all $f \in \mathfrak{M}^+(a,b)$ if and only if there exist a positive constant $C''$ such that
\begin{equation}\label{Vp-inequality}
\bigg(\sum_{k=N+1}^{\infty} 2^{-k} \bigg(\int_{x_k}^{x_{k+1}} u \bigg)^{\frac{r}{q}} \bigg(\sum_{j=N+1}^k a_j\, V_p(x_{j-1}, x_j) \bigg)^r \, \bigg)^{\frac{1}{r}} \leq C'' \bigg(\sum_{k=N+1}^{\infty} a_k^p\bigg)^{\frac{1}{p}},
\end{equation}	
holds for every sequence of non-negative numbers $\{a_k\}_{k=N+1}^{\infty}$. Moreover the best constants $\mathcal{C}''$ and $C''$, respectively, in \eqref{main-iterated-2} and \eqref{Vp-inequality} satisfies $\mathcal{C}'' \approx C''$.
\end{lem}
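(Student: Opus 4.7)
The plan is to mimic the proof of Lemma~\ref{T:equiv.ineq.-1}, replacing the local Hardy constant $B(x_{k-1},x_k)$ from \eqref{B(a,b)} by $V_p(x_{k-1},x_k)$ in its equivalent ``best constant'' form \eqref{Vp-def}. The structural fact that makes the passage between functions $f$ and sequences $\{a_k\}$ work is that the intervals $[x_{k-1},x_k]$ are essentially pairwise disjoint, so that superposing functions supported on distinct intervals decouples the $L^p(v)$-norms.

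For the implication \eqref{main-iterated-2} $\Rightarrow$ \eqref{Vp-inequality}, I would start from an arbitrary non-negative sequence $\{a_k\}_{k=N+1}^{\infty}$ and, for each $k$, use \eqref{Vp-def} to pick a non-negative function $g_k$ supported on $[x_{k-1},x_k]$ with $\int_{x_{k-1}}^{x_k} g_k^p v = 1$ and $\int_{x_{k-1}}^{x_k} g_k \gtrsim V_p(x_{k-1},x_k)$. Testing \eqref{main-iterated-2} against $f = \sum_{j=N+1}^{\infty} a_j g_j$, disjointness of supports collapses the right-hand side to $\bigl(\sum_k a_k^p\bigr)^{1/p}$, while
$$
\int_a^{x_k} f \;=\; \sum_{j=N+1}^{k} a_j \int_{x_{j-1}}^{x_j} g_j \;\gtrsim\; \sum_{j=N+1}^{k} a_j\, V_p(x_{j-1},x_j),
$$
which yields \eqref{Vp-inequality} with $C'' \lesssim \mathcal{C}''$.

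For the converse, given $f \in \mathfrak{M}^+(a,b)$, I would set $a_k := \bigl(\int_{x_{k-1}}^{x_k} f^p v\bigr)^{1/p}$. Then \eqref{Vp-def} supplies $\int_{x_{j-1}}^{x_j} f \leq V_p(x_{j-1},x_j)\, a_j$, and summing together with $\int_a^{x_k} f = \sum_{j=N+1}^{k} \int_{x_{j-1}}^{x_j} f$ (valid since either $x_N = a$ or $x_k \to a$ as $k \to -\infty$) shows that the left-hand side of \eqref{main-iterated-2} is bounded by the left-hand side of \eqref{Vp-inequality} with $a_k$ as chosen, while the right-hand sides agree identically by construction. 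Hence $\mathcal{C}'' \leq C''$.

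This is essentially a routine discretization-duality argument paralleling Lemma~\ref{T:equiv.ineq.-1}, so I do not anticipate any genuine obstacle. The only mildly delicate point is the $p=1$ case in \eqref{Vp-def}, where near-extremal $g_k$ must be chosen by concentrating mass around near-maximizers of $v^{-1}$ on $(x_{k-1},x_k)$; this is standard and requires only the usual approximation argument for the essential supremum.
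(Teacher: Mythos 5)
Your proposal is correct and follows essentially the same argument as the paper: the forward direction tests \eqref{main-iterated-2} with $f = \sum_m a_m g_m$ built from near-extremal functions $g_k$ for $V_p(x_{k-1},x_k)$ with disjoint supports, and the converse plugs $a_k = (\int_{x_{k-1}}^{x_k} f^p v)^{1/p}$ into \eqref{Vp-inequality} together with the pointwise bound $\int_{x_{k-1}}^{x_k} f \leq V_p(x_{k-1},x_k)\,a_k$. The paper's proof has a minor typo in the forward step (it cites \eqref{Vp-inequality} where it means the extremal characterization \eqref{Vp-def}), which you implicitly correct; otherwise the two arguments coincide.
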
	

\begin{proof}
Suppose that  \eqref{main-iterated-2} holds for all $f \in \mathfrak{M}^+(a,b)$. 
Using \eqref{Vp-inequality}, there exist non-negative measurable functions $g_k$, $N+1 \leq k$ on $(x_{k-1}, x_k)$ such that 
$$
\supp g_k \subset [x_{k-1}, x_{k}], \quad \int_{x_{k-1}}^{x_{k}} g_k^p v = 1, \quad  \int_{x_{k-1}}^{x_{k}} g_k  \gtrsim V_p(x_{k-1}, x_k).
$$
Thus, inserting $f = \sum_{m=N+1}^{\infty}  a_m g_m$,   where $\{a_m\}_{m=N+1}^{\infty}$ is any sequence of non-negative numbers, into \eqref{main-iterated-2}, \eqref{Vp-inequality} follows. Moreover, $C'' \lesssim \mathcal{C}''$ holds. 

Conversely, taking $a_k = (\int_{x_{k-1}}^{x_k} f^pv)^{\frac{1}{p}}$ in \eqref{Vp-inequality} and using the estimate  
\[ V_p(x_{k-1}, x_k)\ge \int_{x_{k-1}}^{x_k} f(t) dt\bigg(\int_{x_{k-1}}^{x_k} f(t)^p v(t) dt \bigg)^{-\frac{1}{p}} \]
gives \eqref{main-iterated-2}. Additionally,  $\mathcal{C}'' \leq C''$ holds. Consequently  $\mathcal{C}'' \approx C''$ follows. 
\end{proof}

\begin{thm}\label{T:disc.char.(3.3)}
Let $1 \leq p < \infty$, $0 < q, r < \infty$ and let $u, v, w$ be weights on $(a,b)$. Let $\{x_k\}_{k=N+1}^{\infty}$ be the discretizing sequence of $W^*$. Then inequality \eqref{Bk-inequality} holds for every sequence of non-negative numbers $\{a_k\}_{k=N+1}^{\infty}$ if and only if 

\rm(i) $p \le r$, $p\leq q$  and 
\begin{equation*}
\mathcal{A}_1 :=\sup_{N+1\leq  k <\infty} 2^{-\frac{k}{r}}  \esup_{t \in (x_{k-1}, x_{k})} \bigg(\int_t^{x_{k}} u \bigg)^{\frac{1}{q}} V_p(x_{k-1}, t)  < \infty,
\end{equation*}

\rm(ii) $r < p\leq q$ and
\begin{equation*}
\mathcal{A}_2 :=\bigg(\sum_{k=N+1}^{\infty} 2^{-k\frac{p}{p-r}} \esup_{t \in (x_{k-1}, x_{k})} \bigg(\int_t^{x_{k}} u\bigg)^{\frac{pr}{q(p-r)}} 
V_p(x_{k-1}, t)^{\frac{pr}{p-r}} \bigg)^{\frac{p-r}{pr}} < \infty,
\end{equation*}

\rm(iii) $q< p \le r $ and
\begin{equation*}
\mathcal{A}_3 :=\sup_{N+1 \leq k <\infty} 2^{-\frac{k}{r}} \bigg(\int_{x_{k-1}}^{x_{k}} \bigg(\int_t^{x_{k}} u \bigg)^{\frac{q}{p-q}} u(t) V_p(x_{k-1}, t)^{\frac{pq}{p-q}}  dt \bigg)^{\frac{p-q}{pq}} < \infty,
\end{equation*}

\rm(iv) $r < p$, $q < p$ and  
\begin{equation*}
\mathcal{A}_4 :=\bigg( \sum_{k=N+1}^{\infty} 2^{-k\frac{p}{p-r}} \bigg( \int_{x_{k-1}}^{x_{k}} \bigg(\int_{t}^{x_{k}} u \bigg)^{\frac{q}{p-q}} u(t)  V_p(x_{k-1}, t)^{\frac{pq}{p-q}}  dt \bigg)^{\frac{r(p-q)}{q(p-r)}} \bigg)^{\frac{p-r}{pr}} < \infty.
\end{equation*}

Moreover, the best constant $C'$ in inequality \eqref{Bk-inequality} satisfies 
\begin{equation*}
C' =\begin{cases}
\mathcal{A}_1  &\text{in the case \textup{(i)},}\\
\mathcal{A}_2  &\text{in the case \textup{(ii)},}\\
\mathcal{A}_3 &\text{in the case \textup{(iii)},}\\
\mathcal{A}_4 &\text{in the case \textup{(iv)}}
        \end{cases}
\end{equation*}
\end{thm}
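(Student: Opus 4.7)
The plan is to recognise \eqref{Bk-inequality} as a standard weighted $\ell^p\to\ell^r$ embedding once one sets
\begin{equation*}
 b_k := 2^{-k/r}\,B(x_{k-1},x_k), \qquad N+1\le k<\infty,
\end{equation*}
so that the inequality becomes
\begin{equation*}
 \Bigl(\sum_{k=N+1}^{\infty} b_k^{\,r} a_k^{\,r}\Bigr)^{1/r}
   \le C'\Bigl(\sum_{k=N+1}^{\infty} a_k^{\,p}\Bigr)^{1/p}
   \quad\text{for all } \{a_k\}\subset[0,\infty).
\end{equation*}
Characterising this reduced inequality is classical and splits according to the relative size of $p$ and $r$.

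First I would treat the case $p\le r$. Testing with $a_k=\delta_{kj}$ forces $b_j\le C'$, so $C'\gtrsim\sup_k b_k$. Conversely, since $\ell^p\hookrightarrow\ell^r$ when $p\le r$,
\begin{equation*}
 \Bigl(\sum_k b_k^{\,r} a_k^{\,r}\Bigr)^{1/r}
  \le \bigl(\sup_k b_k\bigr)\Bigl(\sum_k a_k^{\,r}\Bigr)^{1/r}
  \le \bigl(\sup_k b_k\bigr)\Bigl(\sum_k a_k^{\,p}\Bigr)^{1/p},
\end{equation*}
so $C'\approx \sup_k b_k=\sup_k 2^{-k/r}B(x_{k-1},x_k)$. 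For the case $r<p$, Hölder's inequality with conjugate exponents $p/r$ and $p/(p-r)$ gives the upper bound
\begin{equation*}
 \Bigl(\sum_k b_k^{\,r} a_k^{\,r}\Bigr)^{1/r}
  \le \Bigl(\sum_k b_k^{\,pr/(p-r)}\Bigr)^{(p-r)/(pr)}\Bigl(\sum_k a_k^{\,p}\Bigr)^{1/p},
\end{equation*}
while the sharp lower bound follows from the saturating choice $a_k=b_k^{r/(p-r)}$. Hence $C'\approx\bigl(\sum_k b_k^{pr/(p-r)}\bigr)^{(p-r)/(pr)}$.

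It then only remains to insert the explicit formula \eqref{B-char.} for $B(x_{k-1},x_k)$ into the two expressions $\sup_k b_k$ and $\bigl(\sum_k b_k^{pr/(p-r)}\bigr)^{(p-r)/(pr)}$, distinguishing the two alternatives $p\le q$ and $q<p$ in \eqref{B-char.}. This yields exactly the four combinations: $p\le q$ together with $p\le r$ produces $\mathcal A_1$; $p\le q$ with $r<p$ produces $\mathcal A_2$ (after raising the essential-supremum expression to the power $pr/(p-r)$, which passes inside the $\esup$ since $V_p$ is non-negative); $q<p$ with $p\le r$ produces $\mathcal A_3$; and $q<p$ with $r<p$ produces $\mathcal A_4$.

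I do not expect any serious obstacle: the entire argument is a reduction of the discrete inequality to a weighted $\ell^p$-to-$\ell^r$ embedding and a bookkeeping substitution. The only point that requires a touch of care is verifying the identification of exponents in the case $r<p$, $p\le q$, where one must check that $\mathcal A_2$ matches $\bigl(\sum_k b_k^{pr/(p-r)}\bigr)^{(p-r)/(pr)}$ after raising the integral $\int_t^{x_k}u$ to the power $pr/(q(p-r))$; analogous algebra is needed in the fully off-diagonal case producing $\mathcal A_4$. These are straightforward exponent computations and the two-sided equivalences established above transfer immediately to $C'\approx\mathcal A_j$ in each of the four sub-cases.
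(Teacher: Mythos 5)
Your proposal is correct and takes essentially the same route as the paper: the paper deals with inequality \eqref{Bk-inequality} by invoking a known characterisation of the discrete weighted $\ell^p\to\ell^r$ embedding (citing Theorem~4.5 of \cite{GPU-JFA}) and then substituting the formula \eqref{B-char.} for $B(x_{k-1},x_k)$, whereas you prove that embedding characterisation inline via the test sequence $a_k=\delta_{kj}$ together with $\ell^p\hookrightarrow\ell^r$ in the case $p\le r$, and via H\"older with the saturating sequence $a_k=b_k^{r/(p-r)}$ in the case $r<p$. The exponent bookkeeping you carry out matches $\mathcal{A}_1,\dots,\mathcal{A}_4$ in all four sub-cases, so the argument is sound; the only cosmetic point you gloss over is the routine truncation needed in the lower-bound step when $\sum_k b_k^{pr/(p-r)}=\infty$.
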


\begin{proof}
The result follows easily by combining \cite[Theorem~4.5]{GPU-JFA}) with suitable parameters and weights with \eqref{B-char.}. 
\end{proof} 
\begin{thm}\label{T:disc.char.(3.4)}
Let $1 \leq p < \infty$, $0 < q, r < \infty$ and let $u, v, w$ be weights on $(a,b)$. Let $\{x_k\}_{k=N+1}^{\infty}$ be the discretizing sequence of $W^*$. Then inequality \eqref{Vp-inequality} holds for every sequence of non-negative numbers $\{a_k\}_{k=N+1}^{\infty}$if and only if 

\rm(i) $p \le r$ and
\begin{equation}\label{B1*}
\mathcal{B}_1 :=  \sup_{N+1 \leq k <\infty} \bigg(\sum_{i=k}^{\infty} 2^{-i} \bigg(\int_{x_i}^{x_{i+1}} u \bigg)^{\frac{r}{q}} \bigg)^{\frac{1}{r}} V_p(a, x_k) < \infty,
\end{equation}

\rm(ii) $r < p$ and
\begin{equation}\label{B2*}
\mathcal{B}_2 := \bigg( \sum_{k=N+1}^{\infty} 2^{-k} \bigg(\int_{x_k}^{x_{k+1}} u \bigg)^{\frac{r}{q}} \bigg(\sum_{i=k }^{\infty} 2^{-i} \bigg(\int_{x_i}^{x_{i+1}} u \bigg)^{\frac{r}{q}}  \bigg)^{\frac{r}{p-r}} V_p(a, x_k)^{\frac{pr}{p-r}} \bigg)^{\frac{p-r}{pr}}< \infty,
\end{equation}

Moreover, the best constant $C'' $ in inequality \eqref{Vp-inequality} satisfies 
\begin{equation*}
C'' =\begin{cases}
\mathcal{B}_1  &\text{in the case \textup{(i)},}\\
\mathcal{B}_2  &\text{in the case \textup{(ii)}.}
        \end{cases}
\end{equation*}

\end{thm}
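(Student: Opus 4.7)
The plan is to recognize \eqref{Vp-inequality} as a classical discrete weighted Hardy-type inequality and then invoke a known discrete characterization. Set
\[
W_k := 2^{-k}\Big(\int_{x_k}^{x_{k+1}} u\Big)^{r/q}, \qquad U_k := V_p(x_{k-1},x_k)^{-p},
\]
and make the substitution $b_j := a_j\, V_p(x_{j-1},x_j)$. Then \eqref{Vp-inequality} takes the form of the standard discrete Hardy inequality
\[
\Big(\sum_{k=N+1}^{\infty} W_k \Big(\sum_{j=N+1}^{k} b_j\Big)^{r}\Big)^{1/r} \le C''\Big(\sum_{k=N+1}^{\infty} b_k^{p}\, U_k\Big)^{1/p}.
\]
The substitution is a bijection on the cone of nonnegative sequences once the trivial degeneracies ($V_p(x_{j-1},x_j)\in\{0,\infty\}$) are handled separately by a direct argument (if some $V_p(x_{j-1},x_j)=0$, the index $j$ contributes nothing to the LHS and we may take $a_j=0$; if $V_p(x_{j-1},x_j)=\infty$ for some $j$, both sides of the characterization condition degenerate consistently).

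Next, I would apply the classical characterization of the discrete weighted Hardy inequality, which (for $1\le p<\infty$, $0<r<\infty$) gives
\[
C''\approx\begin{cases}
\displaystyle \sup_{k}\Big(\sum_{i\ge k} W_i\Big)^{1/r}\Big(\sum_{j\le k} U_j^{1-p'}\Big)^{1/p'}, & p\le r,\ 1<p,\\[4pt]
\displaystyle \Big(\sum_k\Big(\sum_{i\ge k} W_i\Big)^{r/(p-r)} W_k \Big(\sum_{j\le k} U_j^{1-p'}\Big)^{r(p-1)/(p-r)}\Big)^{(p-r)/(pr)}, & r<p,\ 1<p,
\end{cases}
\]
with the obvious $p=1$ modifications (replacing the $\ell^{p'}$-sum by a supremum). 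This is a standard discrete Hardy fact that can be cited from e.g. \cite{KufPerSam} or derived by straightforward duality from its continuous analog.

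The final step is a routine simplification. For $1<p$ one has
\[
U_j^{1-p'} = V_p(x_{j-1},x_j)^{p/(p-1)} = \int_{x_{j-1}}^{x_j} v^{-1/(p-1)},
\]
so telescoping over $j=N+1,\dots,k$ (using $x_N=a$, or the limit $x_{-\infty}=a$ when $N=-\infty$) yields
\[
\Big(\sum_{j\le k} U_j^{1-p'}\Big)^{1/p'} = \Big(\int_a^{x_k} v^{-1/(p-1)}\Big)^{(p-1)/p} = V_p(a,x_k),
\]
which matches the factor appearing in $\mathcal{B}_1$ and $\mathcal{B}_2$. For $p=1$ the analogous identity
\[
\sup_{j\le k} U_j^{-1} = \sup_{j\le k}\esup_{s\in(x_{j-1},x_j)} v(s)^{-1} = \esup_{s\in(a,x_k)} v(s)^{-1} = V_1(a,x_k)
\]
does the same job. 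Substituting these identities into the discrete Hardy characterization produces exactly $\mathcal{B}_1$ in case (i) and $\mathcal{B}_2$ in case (ii).

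The only nontrivial point is the bookkeeping of the degenerate cases of $V_p(x_{j-1},x_j)$ and the unified treatment of $p=1$ alongside $p>1$; once these are dispatched, the theorem reduces to a citation of the classical discrete Hardy inequality together with the telescoping identity above. I would expect the argument to fit on a single page.
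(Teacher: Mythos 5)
Your proposal takes essentially the same route as the paper: reduce \eqref{Vp-inequality} to a discrete weighted Hardy inequality, cite a known characterization, and simplify via the telescoping identity $\sum_{j\le k}V_p(x_{j-1},x_j)^{p/(p-1)}=V_p(a,x_k)^{p/(p-1)}$ for $p>1$, respectively $\sup_{j\le k}V_1(x_{j-1},x_j)=V_1(a,x_k)$ for $p=1$. The paper applies \cite[Theorem~4.6]{GPU-JFA} directly with the extra kernel sequence $b_k=V_p(x_{k-1},x_k)$; your change of variables $b_j=a_jV_p(x_{j-1},x_j)$ effects the same reduction in different notation, and your degeneracy bookkeeping is consistent with the paper's conventions.

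One caution on the citation step. The reference \cite{KufPerSam} does not give the discrete weighted Hardy characterization across the full range $1\le p<\infty$, $0<r<\infty$ — in particular the case $0<r<1\le p<\infty$ — and the inequality cannot be ``derived by straightforward duality from its continuous analog'' when $r<1$, since $\ell^r$ with $r<1$ has trivial dual. The paper relies on \cite[Theorem~4.6]{GPU-JFA} precisely because it supplies all four parameter cases, including $p=1$. Relatedly, for $p=1\le r$ the form of the condition returned by that theorem is the local one involving $V_1(x_{k-1},x_k)$, and an extra interchange of suprema (using that the tail sum $\sum_{i\ge k}W_i$ is non-increasing in $k$) is needed before the $\sup_{j\le k}$ identity produces $\mathcal{B}_1$; if the statement you invoke already carries the cumulative factor $\sup_{j\le k}$, this step is absorbed, but that depends on the exact formulation being cited. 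With the citation replaced by a result of the right scope, your plan matches the paper's proof.
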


\begin{proof}
We will apply \cite[Theorem~4.6]{GPU-JFA} with suitable parameters  and weights $a_k=2^{-k}\big(\int_{x_k}^{x_{k+1}} u \big)^{\frac{r}{q}} $ and $b_k = V_p(x_{k-1},x_k)$. We need to treat the cases when $p>1$ and $p=1$, separately. Note that when $p>1$, since $\{x_k\}_{k=N+1}^{\infty}$ is the discretizing sequence of $W^*$, we have for each $k: N+1 \leq k$ that
\begin{align*}
\sum_{i=N+1}^{\infty} V_p(x_{i-1},x_i)^{\frac{p}{p-1}} = \sum_{i=N+1}^{\infty} \int_{x_{i-1}}^{x_i} v(s)^{-\frac{1}{p-1}} ds = V_p(a,x_k)^{\frac{p-1}{p}}. 
\end{align*}
Thus applying \cite[Theorem~4.6, (iv)]{GPU-JFA} when $1<p\leq r$ and \cite[Theorem~4.6, (iii)]{GPU-JFA} when $1<p, r<p$, the result follows. 
 
On the other hand, if $p=1$, for each $k: N+1 \leq k$,
\begin{equation}
\sup_{N+1\leq i \leq k} V_p(x_{i-1},x_i) =  \sup_{N+1\leq i \leq k} \esup_{s\in(x_{i-1},x_i)} v(s)^{-1} =  V_p(a,x_k).  
\end{equation}
Therefore, applying \cite[Theorem~4.6, (ii)]{GPU-JFA} when $r<p=1$, we have $C'' \approx \mathcal{B}_2$. Lastly, if $p=1\leq r$, then  applying \cite[Theorem~4.6, (i)]{GPU-JFA}, we have
\begin{equation*}
C''  \approx \sup_{N+1 \leq k <\infty} \bigg(\sum_{i=k}^{\infty} 2^{-i} \bigg(\int_{x_i}^{x_{i+1}} u \bigg)^{\frac{r}{q}} \bigg)^{\frac{1}{r}} V_p(x_{k-1}, x_k).
\end{equation*}
Finally, interchanging supremum yields that
\begin{align*}
C''&\approx \sup_{N+1 \leq k <\infty} V_p(x_{k-1}, x_k) \sup_{k\leq m<\infty} \bigg(\sum_{i=m}^{\infty} 2^{-i} \bigg(\int_{x_i}^{x_{i+1}} u \bigg)^{\frac{r}{q}} \bigg)^{\frac{1}{r}} \\
& = \sup_{N+1 \leq m <\infty} \bigg(\sum_{i=m}^{\infty} 2^{-i} \bigg(\int_{x_i}^{x_{i+1}} u \bigg)^{\frac{r}{q}} \bigg)^{\frac{1}{r}}  \sup_{N+1\leq k \leq  m} V_p(x_{k-1}, x_k) = \mathcal{B}_1.
\end{align*}

\end{proof}
Now, we are in position to formulate the discrete characterization of inequality \eqref{main-iterated}.

\begin{thm}\label{T:disc.char.}
Let $1 \leq p < \infty$, $0 < q, r < \infty$ and let $u, v, w$ be weights on $(a,b)$. Let $\{x_k\}_{k=N+1}^{\infty}$ be the discretizing sequence of $W^*$. Then inequality \eqref{main-iterated} holds for all $f \in \mathfrak{M}^+(a,b)$ if and only if 

\rm(i) $p \le r$, $p\leq q$ and $
\mathcal{A}_1 + \mathcal{B}_1 <\infty$,

\rm(ii) $r < p\leq q$ and
$\mathcal{A}_2 + \mathcal{B}_2<\infty$,

\rm(iii) $q< p \le r $ and $\mathcal{A}_3 + \mathcal{B}_1<\infty$,

\rm(iv) $r < p$, $q < p$ and $\mathcal{A}_4 +\mathcal{B}_2 <\infty$.

Moreover the best constant $C$ in \eqref{main-iterated} satisfies
\begin{equation*}
C =\begin{cases}
\mathcal{A}_1 +\mathcal{B}_1  &\text{in the case \textup{(i)},}\\
\mathcal{A}_2 +\mathcal{B}_2  &\text{in the case \textup{(ii)},}\\
\mathcal{A}_3+\mathcal{B}_1  &\text{in the case \textup{(iii)},}\\
\mathcal{A}_4 +\mathcal{B}_2  &\text{in the case \textup{(iv)}.}
\end{cases}
\end{equation*}
\end{thm}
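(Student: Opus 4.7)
The proof is essentially a combination of the four preceding results: Lemma~\ref{T:equiv.ineq.-0}, Lemma~\ref{T:equiv.ineq.-1}, Lemma~\ref{T:equiv.ineq.-2}, Theorem~\ref{T:disc.char.(3.3)} and Theorem~\ref{T:disc.char.(3.4)}. The plan is simply to chain these equivalences together and then split into four cases according to the parameter ranges.

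First, I would fix a discretizing sequence $\{x_k\}_{k=N}^{\infty}$ of $W^*$ (which exists since $W^*$ is continuous and decreasing from $\int_a^b w$ down to $0$). By Lemma~\ref{T:equiv.ineq.-0}, inequality \eqref{main-iterated} is equivalent to the simultaneous validity of \eqref{main-iterated-1} and \eqref{main-iterated-2}, with best constant $C \approx \mathcal{C}' + \mathcal{C}''$. Next, Lemma~\ref{T:equiv.ineq.-1} shows that \eqref{main-iterated-1} is equivalent to the discrete inequality \eqref{Bk-inequality} with $\mathcal{C}' \approx C'$, and Lemma~\ref{T:equiv.ineq.-2} shows \eqref{main-iterated-2} is equivalent to \eqref{Vp-inequality} with $\mathcal{C}'' \approx C''$. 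Consequently, $C \approx C' + C''$.

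The remaining task is to combine the characterizations of $C'$ and $C''$ supplied by Theorem~\ref{T:disc.char.(3.3)} and Theorem~\ref{T:disc.char.(3.4)} respectively. Theorem~\ref{T:disc.char.(3.3)} has four parameter subcases depending on the relations among $p,q,r$, while Theorem~\ref{T:disc.char.(3.4)} has only two (depending on $p \le r$ or $r < p$). Matching them up case by case yields exactly the four cases listed in Theorem~\ref{T:disc.char.}: in case (i) $p\le r$ and $p\le q$ one uses $\mathcal{A}_1$ from Theorem~\ref{T:disc.char.(3.3)}(i) together with $\mathcal{B}_1$ from Theorem~\ref{T:disc.char.(3.4)}(i); in case (ii) $r<p\le q$, $\mathcal{A}_2$ from Theorem~\ref{T:disc.char.(3.3)}(ii) pairs with $\mathcal{B}_2$ from Theorem~\ref{T:disc.char.(3.4)}(ii); in case (iii) $q<p\le r$, $\mathcal{A}_3$ from Theorem~\ref{T:disc.char.(3.3)}(iii) pairs with $\mathcal{B}_1$; and in case (iv) $r<p$, $q<p$, $\mathcal{A}_4$ pairs with $\mathcal{B}_2$.

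There is no genuine obstacle here beyond bookkeeping: every ingredient needed has already been established, so the proof collapses to listing the equivalences and verifying that the four parameter regimes of the target statement are precisely the pairings arising from the two prior characterization theorems. Accordingly, my write-up would consist of a short paragraph announcing the chain of equivalences from Lemmas~\ref{T:equiv.ineq.-0}--\ref{T:equiv.ineq.-2}, followed by a one-sentence invocation of Theorems~\ref{T:disc.char.(3.3)} and \ref{T:disc.char.(3.4)} and a brief case enumeration recording the asymptotic equality $C \approx \mathcal{A}_i + \mathcal{B}_j$ in each of the four regimes.
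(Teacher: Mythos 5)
Your proposal matches the paper's proof: the paper likewise chains Lemmas~\ref{T:equiv.ineq.-0}--\ref{T:equiv.ineq.-2} to reduce \eqref{main-iterated} to the pair \eqref{Bk-inequality}, \eqref{Vp-inequality} with $C\approx C'+C''$, and then invokes Theorems~\ref{T:disc.char.(3.3)} and \ref{T:disc.char.(3.4)} to read off the four cases. Your case enumeration is exactly the pairing the paper uses, so this is correct and essentially the same argument.
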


\begin{proof}
According to Lemmas~[\ref{T:equiv.ineq.-0}-\ref{T:equiv.ineq.-2}], inequality \eqref{main-iterated} holds if and only if inequalities \eqref{Bk-inequality} and \eqref{Vp-inequality} hold. Moreover, the best constant $C$ in \eqref{main-iterated} satisfies $C\approx C' + C''$, where $C'$ and $C''$ are the best constants in the inequalities \eqref{Bk-inequality} and \eqref{Vp-inequality}, respectively. The result follows from the combination of Theorem~\ref{T:disc.char.(3.3)} and Theorem~\ref{T:disc.char.(3.4)}.
\end{proof}

\section{Proofs}\label{S:Proofs}

\textbf{Proof of Theorem~\ref{T:main}}

\rm(i) Let $1 \leq p \leq \min\{r,q\}$. We have from [Theorem~\ref{T:disc.char.}, (i)] that $C \approx \mathcal{A}_1 + \mathcal{B}_1$. We will prove that $C_1 \approx \mathcal{A}_1 + \mathcal{B}_1$. First, we will show that $\mathcal{A}_1 + \mathcal{B}_1 \approx A_1 + \mathcal{B}_1$, where
\begin{equation*}
A_1 := \sup_{N+1 \leq  k} 2^{-\frac{k}{r}}  \esup_{t \in (a, x_{k})} \bigg(\int_t^{x_{k}} u \bigg)^{\frac{1}{q}} V_p(a, t).
\end{equation*}

It is clear that $\mathcal{A}_1 \leq A_1$. On the other hand, observe that 
\begin{align*}
A_1 & = \sup_{N+1 \leq  k} 2^{-\frac{k}{r}} \sup_{N+1\leq i \leq k} \esup_{t \in (x_{i-1}, x_{i})} \bigg(\int_t^{x_k} u \bigg)^{\frac{1}{q}} V_p(a, t)\\
& \approx \sup_{N+1 \leq  k} 2^{-\frac{k}{r}} \sup_{N+1\leq i \leq k} \esup_{t \in (x_{i-1}, x_{i})} \bigg(\int_t^{x_i} u \bigg)^{\frac{1}{q}} V_p(a, t) \\
& \quad + \sup_{N+2\leq  k} 2^{-\frac{k}{r}} \sup_{N+1\leq i < k} \bigg(\int_{x_i}^{x_k} u \bigg)^{\frac{1}{q}} V_p(a, x_i).
\end{align*}
Then, interchanging the supremum in the first term and applying  \eqref{3-sup-equiv} with $n=N+2$, for the second term, we have that
\begin{align*}
A_1 & \approx  \sup_{N+1 \leq  k} 2^{-\frac{k}{r}} \esup_{t \in (x_{k-1}, x_{k})} \bigg(\int_t^{x_k} u \bigg)^{\frac{1}{q}} V_p(a, t)  + \sup_{N+2\leq  k} 2^{-\frac{k}{r}} \bigg(\int_{x_{k-1}}^{x_k} u \bigg)^{\frac{1}{q}} V_p(a, x_{k-1}).
\end{align*}
Note that, for any $k \geq N+2$, we have
\begin{equation}\label{V-cut}
V_p(a,t) \approx V_p(a, x_{k-1}) + V_p(x_{k-1}, t), \quad \text{for every} \quad t\in (x_{k-1}, x_k).
\end{equation}
Then, in view of \eqref{V-cut}, 
\begin{align*}
A_1 & \approx  \sup_{N+1 \leq  k} 2^{-\frac{k}{r}} \esup_{t \in (x_{k-1}, x_{k})} \bigg(\int_t^{x_k} u \bigg)^{\frac{1}{q}} V_p(x_{k-1}, t)  + \sup_{N+2 \leq  k} 2^{-\frac{k}{r}} \bigg(\int_{x_{k-1}}^{x_k} u \bigg)^{\frac{1}{q}} V_p(a, x_{k-1})\\
& \lesssim \mathcal{A}_1 + \mathcal{B}_1.
\end{align*}
Then we have that $\mathcal{A}_1 + \mathcal{B}_1 \leq A_1 + \mathcal{B}_1 \lesssim \mathcal{A}_1 + \mathcal{B}_1$. 

It remains to show that $A_1 + \mathcal{B}_1 \approx C_1$. Applying \eqref{sup.equiv} with $\alpha = \frac{1}{r}$,
\begin{equation*}
A_1 \approx \esup_{x \in (a, b)} \bigg(\int_x^b w \bigg)^{\frac{1}{r}} \esup_{t\in (a,x)} \bigg(\int_t^x u \bigg)^{\frac{1}{q}} V_p(a, t)
\end{equation*}
holds, and interchanging supremum gives that
\begin{align}\label{A1<C1}
A_1 &\approx \esup_{t \in (a, b)} V_p(a, t) \esup_{x\in (t,b)} \bigg(\int_x^b w \bigg)^{\frac{1}{r}}  \bigg(\int_t^x u \bigg)^{\frac{1}{q}} \notag\\ 
& \leq \esup_{t \in (a, b)} V_p(a, t) \bigg(\int_t^b w(s) \bigg(\int_t^s u \bigg)^{\frac{r}{q}} ds \bigg)^{\frac{1}{r}}= C_1.
\end{align}
On the other hand, applying \eqref{int.equiv} with $\alpha= 0$, then using \eqref{dec-sum-sum} with $n=k+1$, we obtain for any $k\geq N$ that
\begin{align}\label{int-wu-sum}
\int_{x_k}^b w(s) \bigg(\int_{x_k}^s u \bigg)^{\frac{r}{q}} ds   \approx  \sum_{i=k+1}^{\infty} 2^{-i} \bigg(\int_{x_k}^{x_i} u \bigg)^{\frac{r}{q}} ds  \approx \sum_{i=k}^{\infty} 2^{-i} \bigg( \int_{x_i}^{x_{i+1}} u \bigg)^{\frac{r}{q}}.
\end{align}
Therefore, in view of \eqref{int-wu-sum},
\begin{align}
\mathcal{B}_1 & \approx  \sup_{N+1 \leq k} \bigg(\int_{x_k}^b w(s) \bigg(\int_{x_k}^s u \bigg)^{\frac{r}{q}} ds  \bigg)^{\frac{1}{r}} V_p(a, x_k)  \notag \\
& \leq \sup_{N+1 \leq k} \esup_{t \in (x_{k-1}\ x_k)} \bigg(\int_t^{b} w(s) \bigg(\int_t^{s} u \bigg)^{\frac{r}{q}} ds \bigg)^{\frac{1}{r}} V_p(a, t)  
= C_1. \label{B1*<C1}
\end{align}
Thus, combining \eqref{A1<C1} with \eqref{B1*<C1}, we have that $A_1 + \mathcal{B}_1 \lesssim C_1$. 

Conversely, using \eqref{V-cut}, we have
\begin{align*}
C_1 & \approx  \sup_{N+1 \leq k}   \bigg(\int_{x_k}^{b} w(s) \bigg(\int_{x_k}^s u \bigg)^{\frac{r}{q}} ds \bigg)^{\frac{1}{r}}  V_p(x_{k-1}, x_k) \\
&+\sup_{N+1 \leq k} 2^{-\frac{k}{r}}\esup_{t \in (x_{k-1}, x_k)}  \bigg(\int_{t}^{x_k} u \bigg)^{\frac{1}{q}}  V_p(x_{k-1}, t) \\
&+\sup_{N+1 \leq k} \esup_{t \in (x_{k-1}, x_k)}  \bigg(\int_{t}^{x_k} w(s) \bigg(\int_{t}^s u \bigg)^{\frac{r}{q}} ds \bigg)^{\frac{1}{r}}  V_p(x_{k-1}, t)\\
	& +  \sup_{N+2 \leq k} \bigg(\int_{x_{k-1}}^b w(s) \bigg(\int_{x_{k-1}}^s u \bigg)^{\frac{r}{q}} ds \bigg)^{\frac{1}{r}}  V_p(a, x_{k-1}).
\end{align*}
Then, using \eqref{int-wu-sum}, we arrive at
\begin{align} \label{C1<A1+B1*}
C_1 & \lesssim \sup_{N+1\leq k} 2^{-\frac{k}{r}} \esup_{t \in (x_{k-1}, x_k)} \bigg(\int_t^{x_k} u \bigg)^{\frac{1}{q}}  V_p(a, t) \notag\\
& \quad + \sup_{N+1 \leq k} \bigg(\sum_{i=k}^{\infty} 2^{-i} \bigg(\int_{x_i}^{x_{i+1}} u \bigg)^{\frac{r}{q}} \bigg)^{\frac{1}{r}}  V_p(a, x_{k}) \notag \\
	& \leq A_1 + \mathcal{B}_1.
\end{align}

As a result, we arrive at the conclusion that the best constant $C$ in \eqref{main-iterated} satisfies $C \approx C_1$.

\rm(ii) Let $r < p \leq q$. Then, we have from [Theorem~\ref{T:disc.char.}, (ii)] that the best constant in \eqref{main-iterated} satisfies $C\approx \mathcal{A}_2 + \mathcal{B}_2$. We will start by showing that $\mathcal{A}_2 + \mathcal{B}_2 \approx A_2 + B_2$, where
\begin{equation*}
A_2 := \bigg(\sum_{k=N+1}^{\infty} 2^{-k\frac{p}{p-r}} \esup_{t \in (a, x_{k})} \bigg(\int_t^{x_{k}} u\bigg)^{\frac{pr}{q(p-r)}} V_p(a, t)^{\frac{pr}{p-r}} \bigg)^{\frac{p-r}{pr}}
\end{equation*}
and
\begin{equation} \label{B2}
B_2 := \bigg(\sum_{k=N+1}^{\infty} 2^{-k} \bigg(\int_{x_k}^{x_{k+1}} u \bigg)^{\frac{r}{q}} \bigg(\sum_{i=k+2}^{\infty} 2^{-i} \bigg(\int_{x_i}^{x_{i+1}} u \bigg)^{\frac{r}{q}}  \bigg)^{\frac{r}{p-r}} 	V_p(a, x_k)^{\frac{pr}{p-r}} \bigg)^{\frac{p-r}{pr}}.
\end{equation}
We have $\mathcal{A}_2 \leq A_2$ by the definitions of $\mathcal{A}_2$ and $A_2$. On the other hand, 
\begin{align}
\mathcal{B}_2 &\approx B_2  + \bigg(\sum_{k=N+1}^{\infty} 2^{-k\frac{p}{p-r}} \bigg(\int_{x_k}^{x_{k+1}} u \bigg)^{\frac{rp}{q(p-r)}}  V_p(a, x_k)^{\frac{pr}{p-r}} \bigg)^{\frac{p-r}{pr}} \notag \\
& \hspace{2cm} + 
\bigg(\sum_{k=N+1}^{\infty} 2^{-k\frac{p}{p-r}} \bigg(\int_{x_{k}}^{x_{k+1}} u \bigg)^{\frac{r}{q}}  \bigg(\int_{x_{k+1}}^{x_{k+2}} u \bigg)^{\frac{r^2}{q(p-r)}}  V_p(a, x_k)^{\frac{r}{p-r}} \bigg)^{\frac{p-r}{pr}} \notag\\
& \lesssim B_2 + \bigg(\sum_{k=N+1}^{\infty} 2^{-k\frac{p}{p-r}} \bigg(\int_{x_k}^{x_{k+2}} u \bigg)^{\frac{pr}{q(p-r)}}  V_p(a, x_k)^{\frac{pr}{p-r}} \bigg)^{\frac{p-r}{pr}} \notag \\
& \lesssim B_2 + \bigg(\sum_{k=N+1}^{\infty} 2^{-k\frac{p}{p-r}} \esup_{t \in (a, x_k)} \bigg(\int_{t}^{x_{k+2}} u \bigg)^{\frac{pr}{q(p-r)}}  V_p(a, t)^{\frac{pr}{p-r}} \bigg)^{\frac{p-r}{pr}} \notag \\
& \lesssim B_2 + A_2 \label{B2*<A2+B2}
\end{align}
holds, hence $\mathcal{A}_2 + \mathcal{B}_2 \lesssim A_2 + B_2$. 

Next, we will show that $A_2 \lesssim \mathcal{A}_2 + \mathcal{B}_2$. Observe that,
\begin{align*}
A_2 & = \bigg(\sum_{k=N+1}^{\infty} 2^{-k\frac{p}{p-r}}  \sup_{N+1 \leq i \leq k} \esup_{t \in (x_{i-1}, x_{i})} \bigg(\int_t^{x_{k}} u\bigg)^{\frac{pr}{q(p-r)}} V_p(a, t)^{\frac{pr}{p-r}} \bigg)^{\frac{p-r}{pr}}  \\
& \approx \bigg(\sum_{k=N+1}^{\infty} 2^{-k\frac{p}{p-r}}  \sup_{N+1 \leq i \leq k} \esup_{t \in (x_{i-1}, x_{i})} \bigg(\int_t^{x_{i}} u\bigg)^{\frac{pr}{q(p-r)}} V_p(a, t)^{\frac{pr}{p-r}} \bigg)^{\frac{p-r}{pr}}  \\
& \quad + \bigg(\sum_{k=N+2}^{\infty} 2^{-k\frac{p}{p-r}}  \sup_{N+1 \leq i < k} \bigg(\int_{x_i}^{x_{k}} u\bigg)^{\frac{pr}{q(p-r)}} V_p(a, x_i)^{\frac{pr}{p-r}} \bigg)^{\frac{p-r}{pr}}.
\end{align*}
Applying \eqref{sum-sup} for the first term and \eqref{3-sum-equiv} for the second term, we obtain that
\begin{align*}
A_2 &  \approx \bigg(\sum_{k=N+1}^{\infty} 2^{-k\frac{p}{p-r}} \esup_{t \in (x_{k-1}, x_{k})} \bigg(\int_t^{x_{k}} u\bigg)^{\frac{pr}{q(p-r)}} V_p(a, t)^{\frac{pr}{p-r}} \bigg)^{\frac{p-r}{pr}}  \\
& \quad + \bigg(\sum_{k=N+2}^{\infty} 2^{-k\frac{p}{p-r}}  \bigg(\int_{x_{k-1}}^{x_{k}} u\bigg)^{\frac{pr}{q(p-r)}} V_p(a, x_{k-1})^{\frac{pr}{p-r}} \bigg)^{\frac{p-r}{pr}}.
\end{align*}
Using
\eqref{V-cut}
we arrive at
\begin{align*}
A_2 & \lesssim \mathcal{A}_2 + \bigg(\sum_{k=N+1}^{\infty} 2^{-k\frac{p}{p-r}} \bigg(\int_{x_{k}}^{x_{k+1}} u\bigg)^{\frac{pr}{q(p-r)}} V_p(a, x_{k})^{\frac{pr}{p-r}} \bigg)^{\frac{p-r}{pr}} \lesssim \mathcal{A}_2 + \mathcal{B}_2.
\end{align*}
Furthermore, it is clear from the definitions of $\mathcal{B}_2$ and $B_2$ that $B_2 \leq \mathcal{B}_2$. Then, we have $A_2 + B_2 \lesssim \mathcal{A}_2 + \mathcal{B}_2$, as well. Consequently, $C \approx A_2 + B_2$ holds.  

Next, we will prove that $A_2 + B_2 \approx C_2 + C_3$.
First of all, applying \eqref{int.equiv} with $\alpha = \frac{r}{p-r}$ and
\begin{equation*}
    h(x) = \esup_{t \in (a, x)} \bigg(\int_t^{x} u \bigg)^{\frac{pr}{q(p-r)}}  V_p(a, t)^{\frac{pr}{p-r}}, \quad x\in (a, b),
\end{equation*}
it is clear that
\begin{equation}\label{A2=C2}
A_2 \approx \bigg(\int_a^b \bigg(\int_x^b w \bigg)^{\frac{r}{p-r}} w(x) \esup_{t \in (a, x)} \bigg(\int_t^{x} u \bigg)^{\frac{pr}{q(p-r)}}  V_p(a, t)^{\frac{pr}{p-r}} dx \bigg)^{\frac{p-r}{pr}} = C_2
\end{equation}

On the other hand, using \eqref{int-wu-sum},
\begin{align}
B_2 & \approx \bigg(\sum_{k=N+1}^{\infty} 2^{-k}  \bigg(\int_{x_{k+2}}^b w(s) \bigg(\int_{x_{k+2}}^s u \bigg)^{\frac{r}{q}} ds  \bigg)^{\frac{r}{p-r}} \bigg(\int_{x_k}^{x_{k+1}} u \bigg)^{\frac{r}{q}}  V_p(a, x_k)^{\frac{pr}{p-r}} \bigg)^{\frac{p-r}{pr}}\notag\\
& \leq \bigg(\sum_{k=N+1}^{\infty} 2^{-k}  \bigg(\int_{x_{k+2}}^b w(s) \bigg(\int_{x_{k+2}}^s u \bigg)^{\frac{r}{q}} ds  \bigg)^{\frac{r}{p-r}} \notag\\
&\hspace{5cm} \times \esup_{t \in (a, x_{k+1})} \bigg(\int_{t}^{x_{k+1}} u \bigg)^{\frac{r}{q}}  V_p(a, t)^{\frac{r}{p-r}} \bigg)^{\frac{p-r}{pr}}\notag\\
& \approx\bigg(\sum_{k=N+1}^{\infty} \int_{x_{k+1}}^{x_{k+2}} w(x) dx  \bigg(\int_{x_{k+2}}^b w(s) \bigg(\int_{x_{k+2}}^s  u \bigg)^{\frac{r}{q}} ds  \bigg)^{\frac{r}{p-r}} \notag\\
& \hspace{5cm} \times \esup_{t \in (a, x_{k+1})} \bigg(\int_{t}^{x_{k+1}} u \bigg)^{\frac{r}{q}}  V_p(a, t)^{\frac{pr}{p-r}} \bigg)^{\frac{p-r}{pr}}\notag\\
& \leq \bigg(\sum_{k=N+1}^{\infty} \int_{x_{k+1}}^{x_{k+2}} w(x)  \bigg(\int_{x}^b w(s) \bigg(\int_{x}^s u \bigg)^{\frac{r}{q}} ds  \bigg)^{\frac{r}{p-r}} \notag\\
&\hspace{5cm} \times \esup_{t \in (a, x)} \bigg(\int_{t}^{x} u \bigg)^{\frac{r}{q}}  V_p(a, t)^{\frac{pr}{p-r}} dx \bigg)^{\frac{p-r}{pr}} \notag\\
&\leq C_3. \label{B2<C3}
\end{align}
Combination of \eqref{A2=C2} and \eqref{B2<C3} yield that $A_2 + B_2 \lesssim C_2 + C_3$. 

Conversely,
\begin{align*}
C_3 & = \bigg( \sum_{k=N+1}^{\infty} \int_{x_{k-1}}^{x_k}  \bigg(\int_{x}^b w(s) \bigg(\int_{x}^s u \bigg)^{\frac{r}{q}} ds  \bigg)^{\frac{r}{p-r}} w(x) \esup_{t \in (a, x)} \bigg(\int_{t}^{x} u \bigg)^{\frac{r}{q}} V_p(a, t)^{\frac{pr}{p-r}}  dx \bigg)^{\frac{p-r}{pr}} \\
& \approx \bigg( \sum_{k=N+1}^{\infty} \int_{x_{k-1}}^{x_k}  \bigg(\int_{x}^{x_k} w(s) \bigg(\int_{x}^s u \bigg)^{\frac{r}{q}} ds  \bigg)^{\frac{r}{p-r}} w(x) \esup_{t \in (a, x)} \bigg(\int_{t}^{x} u \bigg)^{\frac{r}{q}} V_p(a, t)^{\frac{pr}{p-r}} dx \bigg)^{\frac{p-r}{pr}} \\
& \hspace{0.2cm} + \bigg( \sum_{k=N+1}^{\infty} \int_{x_{k-1}}^{x_k}  \bigg(\int_{x_k}^b w(s) \bigg(\int_{x}^s u \bigg)^{\frac{r}{q}} ds  \bigg)^{\frac{r}{p-r}} w(x) \esup_{t \in (a, x)} \bigg(\int_{t}^{x} u \bigg)^{\frac{r}{q}} V_p(a, t)^{\frac{pr}{p-r}}  dx \bigg)^{\frac{p-r}{pr}} \\
& \approx \bigg( \sum_{k=N+1}^{\infty} \int_{x_{k-1}}^{x_k}  \bigg(\int_{x}^{x_k} w(s) \bigg(\int_{x}^s u \bigg)^{\frac{r}{q}} ds  \bigg)^{\frac{r}{p-r}} w(x) \esup_{t \in (a, x)} \bigg(\int_{t}^{x} u \bigg)^{\frac{r}{q}}  V_p(a, t)^{\frac{pr}{p-r}}  dx \bigg)^{\frac{p-r}{pr}} \\
& \hspace{0.2cm} + \bigg( \sum_{k=N+1}^{\infty} 2^{-k\frac{r}{p-r}} \int_{x_{k-1}}^{x_k} \bigg(\int_{x}^{x_k} u \bigg)^{\frac{r^2}{q(p-r)}}   w(x) \esup_{t \in (a, x)} \bigg(\int_{t}^{x} u \bigg)^{\frac{r}{q}}  V_p(a, t)^{\frac{pr}{p-r}}  dx \bigg)^{\frac{p-r}{pr}} \\
& \hspace{0.2cm}  + \bigg( \sum_{k=N+1}^{\infty} \bigg(\int_{x_k}^b w(s) \bigg(\int_{x_k}^s u \bigg)^{\frac{r}{q}} ds  \bigg)^{\frac{r}{p-r}} \int_{x_{k-1}}^{x_k} w(x) \esup_{t \in (a, x)} \bigg(\int_{t}^{x} u \bigg)^{\frac{r}{q}}  V_p(a, t)^{\frac{pr}{p-r}}  dx \bigg)^{\frac{p-r}{pr}} \\
& =: C_{3,1} + C_{3,2} + C_{3,3}.
\end{align*}
It is easy to see that
\begin{align*}
C_{3,1} & \leq \bigg( \sum_{k=N+1}^{\infty} \int_{x_{k-1}}^{x_k}  \bigg(\int_{x}^{x_k} w \bigg)^{\frac{r}{p-r}} w(x) \esup_{t \in (a, x)} \bigg(\int_{t}^{x_k} u \bigg)^{\frac{pr}{q(p-r)}}  V_p(a, t)^{\frac{pr}{p-r}} dx \bigg)^{\frac{p-r}{pr}} \\
& \leq \bigg( \sum_{k=N+1}^{\infty}  \int_{x_{k-1}}^{x_k}  \bigg(\int_{x}^{x_k} w \bigg)^{\frac{r}{p-r}} w(x) dx \, \esup_{t \in (a, x_k)} \bigg(\int_{t}^{x_k} u \bigg)^{\frac{pr}{q(p-r)}}  V_p(a, t)^{\frac{pr}{p-r}} \bigg)^{\frac{p-r}{pr}} \\
& \approx A_2,
\end{align*}
and
\begin{align*}
C_{3,2} &\leq \bigg( \sum_{k=N+1}^{\infty} 2^{-k\frac{r}{p-r}} \int_{x_{k-1}}^{x_k} w(x)  \esup_{t \in (a, x)} \bigg(\int_{t}^{x_k} u \bigg)^{\frac{pr}{q(p-r)}}  V_p(a, t)^{\frac{pr}{p-r}} dx \bigg)^{\frac{p-r}{pr}} \leq A_2
\end{align*}
hold. Furthermore,
\begin{align*}
C_{3,3} & \lesssim \bigg( \sum_{k=N+1}^{\infty} 2^{-k} \bigg(\int_{x_k}^b w(s) \bigg(\int_{x_k}^s u \bigg)^{\frac{r}{q}} ds  \bigg)^{\frac{r}{p-r}}  \esup_{t \in (a, x_k)} \bigg(\int_{t}^{x_k} u \bigg)^{\frac{r}{q}}  V_p(a, t)^{\frac{pr}{p-r}}  \bigg)^{\frac{p-r}{pr}}\\
& = \bigg( \sum_{k=N+1}^{\infty} 2^{-k} \bigg(\int_{x_k}^b w(s) \bigg(\int_{x_k}^s u \bigg)^{\frac{r}{q}} ds  \bigg)^{\frac{r}{p-r}} \\
& \hspace{4cm} \times  \sup_{N+1 \leq i \leq  k} \esup_{t \in (x_{i-1}, x_i)} \bigg(\int_{t}^{x_k} u \bigg)^{\frac{r}{q}}  V_p(a, t)^{\frac{pr}{p-r}}  \bigg)^{\frac{p-r}{pr}}\\
& \approx \bigg( \sum_{k=N+1}^{\infty} 2^{-k} \bigg(\int_{x_k}^b w(s) \bigg(\int_{x_k}^s u \bigg)^{\frac{r}{q}} ds  \bigg)^{\frac{r}{p-r}} \\
& \hspace{4cm} \times \sup_{N+1 \leq i \leq k} \esup_{t \in (x_{i-1}, x_i)} \bigg(\int_{t}^{x_i} u \bigg)^{\frac{r}{q}} V_p(a, t)^{\frac{pr}{p-r}}  \bigg)^{\frac{p-r}{pr}} \\
& + \bigg( \sum_{k=N+2}^{\infty} 2^{-k} \bigg(\int_{x_k}^b w(s) \bigg(\int_{x_k}^s u \bigg)^{\frac{r}{q}} ds  \bigg)^{\frac{r}{p-r}}  \sup_{N+1 \leq i <  k} \bigg(\int_{x_i}^{x_{k}} u \bigg)^{\frac{r}{q}} V_p(a, x_i)^{\frac{pr}{p-r}} \bigg)^{\frac{p-r}{pr}}  \\
& =: I + II.
\end{align*}
Since, the sequence $\{a_k\}_{k=N+1}^{\infty}$, with 
$$
a_k =: 2^{-k} \bigg(\int_{x_k}^b w(s) \bigg(\int_{x_k}^s u \bigg)^{\frac{r}{q}} ds  \bigg)^{\frac{r}{p-r}} 
$$
is geometrically decreasing, \eqref{sum-sup} yields that
\begin{align*}
I & \approx \bigg( \sum_{k=N+1}^{\infty} 2^{-k}  \bigg(\int_{x_k}^b w(s) \bigg(\int_{x_k}^s u \bigg)^{\frac{r}{q}} ds  \bigg)^{\frac{r}{p-r}} \esup_{t \in (x_{k-1}, x_k)} \bigg(\int_{t}^{x_k} u \bigg)^{\frac{r}{q}}  V_p(a, t)^{\frac{pr}{p-r}}  \bigg)^{\frac{p-r}{pr}}.
\end{align*}
Let  $y_k \in [x_{k-1}, x_k]$, $N \leq k$ be such that
\begin{align}
\esup_{t \in (x_{k-1}, x_k)} \bigg(\int_{t}^{x_k} u \bigg)^{\frac{r}{q}}  V_p(a, t)^{\frac{pr}{p-r}}   \lesssim \bigg(\int_{y_k}^{x_k} u \bigg)^{\frac{r}{q}}  V_p(a, y_k)^{\frac{pr}{p-r}}. 
\end{align}
Then, we have
\begin{align*}
I & \lesssim \bigg( \sum_{k=N+1}^{\infty} 2^{-k} \bigg(\int_{x_k}^b w(s) \bigg(\int_{x_k}^s u \bigg)^{\frac{r}{q}} ds  \bigg)^{\frac{r}{p-r}} \bigg(\int_{y_k}^{x_k} u \bigg)^{\frac{r}{q}}  V_p(a, y_k)^{\frac{pr}{p-r}} \bigg)^{\frac{p-r}{pr}}
\end{align*}
Thus, \eqref{int-wu-sum} ensures that
\begin{align*}
I & \lesssim \bigg( \sum_{k=N+1}^{\infty} 2^{-k} \bigg(\sum_{i=k}^{\infty} 2^{-i} \bigg( \int_{x_i}^{x_{i+1}} u \bigg)^{\frac{r}{q}}  \bigg)^{\frac{r}{p-r}} \bigg(\int_{y_k}^{x_k} u \bigg)^{\frac{r}{q}}  V_p(a, y_k)^{\frac{pr}{p-r}} \bigg)^{\frac{p-r}{pr}}\\
& \leq
\bigg( \sum_{k=N+1}^{\infty} 2^{-k} \bigg(\sum_{i=k}^{\infty} 2^{-i} \bigg( \int_{y_i}^{y_{i+2}} u \bigg)^{\frac{r}{q}}  \bigg)^{\frac{r}{p-r}} \bigg(\int_{y_k}^{y_{k+1}} u \bigg)^{\frac{r}{q}}  V_p(a, y_k)^{\frac{pr}{p-r}} \bigg)^{\frac{p-r}{pr}}.
\end{align*}
Moreover,
\begin{equation*}
2^{-k} \approx \int_{x_k}^b w \leq \int_{y_k}^b w \leq \int_{x_{k-1}}^b w \approx 2^{-(k-1)}, \quad N+1 \leq k. 
\end{equation*}
As a result, $\{y_k\}_{k=N+1}^{\infty}$ is a discretizing sequence of $W^*$, as well. This fact together with \eqref{B2*<A2+B2} yield $I \lesssim \mathcal{B}_2 \lesssim A_2 +B_2$. 

On the other hand, applying \eqref{3-sum-equiv} with
$$
\tau_k =  2^{-k} \bigg(\int_{x_k}^b w(s) \bigg(\int_{x_k}^s u \bigg)^{\frac{r}{q}} ds  \bigg)^{\frac{r}{p-r}}, \quad \sigma_k = V_p(a, x_k)^{\frac{pr}{p-r}}, \quad \alpha=\frac{r}{q},
$$
gives
\begin{align*}
II \approx  \bigg( \sum_{k=N+2}^{\infty} 2^{-k} \bigg(\int_{x_k}^b w(s) \bigg(\int_{x_k}^s u \bigg)^{\frac{r}{q}} ds  \bigg)^{\frac{r}{p-r}}  \bigg(\int_{x_{k-1}}^{x_{k}} u \bigg)^{\frac{r}{q}} V_p(a, x_{k-1})^{\frac{pr}{p-r}} \bigg)^{\frac{p-r}{pr}}.  
\end{align*}
Finally, using \eqref{int-wu-sum} and \eqref{B2*<A2+B2}, we obtain that
\begin{align*}
II &  \approx  \bigg( \sum_{k=N+2}^{\infty} 2^{-k} \bigg(\sum_{i=k}^{\infty} 2^{-i} \bigg( \int_{x_i}^{x_{i+1}} u \bigg)^{\frac{r}{q}} \bigg)^{\frac{r}{1-r}} \bigg(\int_{x_{k-1}}^{x_k} u \bigg)^{\frac{r}{q}} V_p(a, x_{k-1})^{\frac{pr}{p-r}}  \bigg)^{\frac{p-r}{pr}}
\leq A_2 + B_2.
\end{align*}
Therefore, 
\begin{equation}\label{C3<A2+B2}
C_3 \lesssim A_2 + B_2.
\end{equation}

Finally, combination of \eqref{A2=C2} and \eqref{C3<A2+B2} yield, $C_2 + C_3 \approx  A_2 + B_2$. Accordingly, the best constant $C$ in \eqref{main-iterated} satisfies $C \approx C_2 +C_3 $.

\rm(iii) Let $ q< p \le r$. According to [Theorem~\ref{T:disc.char.}, (iii)], the best constant in \eqref{main-iterated} satisfies $C\approx \mathcal{A}_3 + \mathcal{B}_1$. We will begin our proof by showing that $\mathcal{A}_3 + \mathcal{B}_1 \approx A_3 + \mathcal{B}_1$, where
\begin{equation*}
A_3 := \sup_{N+1 \leq k <\infty} 2^{-\frac{k}{r}} \bigg(\int_a^{x_{k}} \bigg(\int_t^{x_{k}} u \bigg)^{\frac{q}{p-q}} u(t) \,  V_p(a,t)^{\frac{pq}{p-q}}dt \bigg)^{\frac{p-q}{pq}}.
\end{equation*}
It is clear that $\mathcal{A}_3 \leq A_3$, the proof of this part is complete if we show that $A_3 \lesssim \mathcal{A}_3 + \mathcal{B}_1$. Assume that $\max\{\mathcal{A}_3, \mathcal{B}_1\}<\infty$. Then,
\begin{equation*}
\bigg(\int_{x_{k-1}}^{x_{k}} \bigg(\int_t^{x_{k}} u \bigg)^{\frac{q}{p-q}} u(t) \, V_p(a,t)^{\frac{pq}{p-q}}dt \bigg)^{\frac{p-q}{pq}} <\infty, \quad k\geq N+1
\end{equation*}
holds. Thus, for each $t\in [x_{k-1}, x_k]$, $k\geq N+1$, we have
\begin{equation*}
\bigg(\int_t^{x_{k}} u \bigg)^{\frac{1}{q}} V_p(a,t) \lesssim \bigg(\int_{t}^{x_{k}} \bigg(\int_s^{x_{k}} u \bigg)^{\frac{q}{p-q}} u(s) \, V_p(a,s)^{\frac{pq}{p-q}} ds \bigg)^{\frac{p-q}{pq}}.
\end{equation*}
Therefore, we have 
\begin{equation}\label{lim-0}
\lim_{t \rightarrow x_k-}    \bigg(\int_t^{x_{k}} u \bigg)^{\frac{1}{q}} V_p(a,t) = 0. 
\end{equation}
In that case, integration by parts gives
\begin{align}
\bigg(\int_a^{x_{k}} & \bigg(\int_t^{x_{k}} u \bigg)^{\frac{q}{p-q}} u(t) \, V_p(a,t)^{\frac{pq}{p-q}}dt \bigg)^{\frac{p-q}{pq}} \notag \\
& \approx \bigg(\int_a^{x_{k}} \bigg(\int_t^{x_{k}} u \bigg)^{\frac{p}{p-q}}  d\big[ V_p(a,t)^{\frac{pq}{p-q}} \big] \bigg)^{\frac{p-q}{pq}} + \lim_{t \rightarrow a+} \bigg(\int_t^{x_{k}} u \bigg)^{\frac{1}{q}}  V_p(a,t)\notag \\
& \approx \bigg(\sum_{i=N+1}^k \int_{x_{i-1}}^{x_{i}} \bigg(\int_t^{x_{i}} u \bigg)^{\frac{p}{p-q}}  d\big[V_p(a,t)^{\frac{pq}{p-q}}\big]   \bigg)^{\frac{p-q}{pq}} \notag\\
& \quad +  \bigg(\sum_{i=N+1}^{k-1} \bigg(\int_{x_{i}}^{x_k} u \bigg)^{\frac{p}{p-q}} \int_{x_{i-1}}^{x_{i}} d\big[V_p(a,t)^{\frac{pq}{p-q}}\big]   \bigg)^{\frac{p-q}{pq}} \notag
\\
& \qquad +  \lim_{t \rightarrow a+} \bigg(\int_t^{x_{k}} u \bigg)^{\frac{1}{q}}  V_p(a,t).\notag
\end{align}
Moreover,  Minkowski's inequality with $\frac{p}{p-q} > 1$ yields that 
\begin{align*}
\bigg(\sum_{i=N+1}^{k-1} & \bigg(\int_{x_{i}}^{x_k} u \bigg)^{\frac{p}{p-q}} \int_{x_{i-1}}^{x_{i}} d\big[V_p(a,t)^{\frac{pq}{p-q}}\big]   \bigg)^{\frac{p-q}{pq}} \\
& = \bigg(\sum_{i=N+1}^{k-1} \bigg(\sum_{j=i}^{k-1} \int_{x_{j}}^{x_{j+1}} u \bigg)^{\frac{p}{p-q}} \int_{x_{i-1}}^{x_{i}} d\big[V_p(a,t)^{\frac{pq}{p-q}}\big]   \bigg)^{\frac{p-q}{pq}}\\
& \leq  \bigg(\sum_{j=N+1}^{k-1} \bigg(\int_{x_{j}}^{x_{j+1}} u \bigg) \bigg(\sum_{i=N+1}^{j} \int_{x_{i-1}}^{x_i}  d\big[V_p(a,t)^{\frac{pq}{p-q}}\big]   \bigg)^{\frac{p-q}{p}} \bigg)^{\frac{1}{q}}\\
& =  \bigg(\sum_{j=N+1}^{k-1} \bigg(\int_{x_{j}}^{x_{j+1}} u \bigg) \bigg( \int_{a}^{x_{j}}  d\big[V_p(a,t)^{\frac{pq}{p-q}}\big]   \bigg)^{\frac{p-q}{p}} \bigg)^{\frac{1}{q}}.
\end{align*}
Then, we arrive at
\begin{align}\label{IBP-estimate}
\bigg(\int_a^{x_{k}} & \bigg(\int_t^{x_{k}} u \bigg)^{\frac{q}{p-q}} u(t) \, V_p(a,t)^{\frac{pq}{p-q}}dt \bigg)^{\frac{p-q}{pq}} \notag \\
& \lesssim \bigg(\sum_{i=N+1}^k \int_{x_{i-1}}^{x_{i}} \bigg(\int_t^{x_{i}} u \bigg)^{\frac{p}{p-q}}  d\big[V_p(a,t)^{\frac{pq}{p-q}}\big]   \bigg)^{\frac{p-q}{pq}} \notag\\
& \quad +  \bigg(\sum_{j=N+1}^{k-1} \bigg(\int_{x_{j}}^{x_{j+1}} u \bigg) \bigg(\int_{a}^{x_j}  d\big[V_p(a,t)^{\frac{pq}{p-q}}\big]   \bigg)^{\frac{p-q}{p}} \bigg)^{\frac{1}{q}} \notag\\
& \qquad +  \lim_{t \rightarrow a+} \bigg(\int_t^{x_{k}} u \bigg)^{\frac{1}{q}}  V_p(a,t).
\end{align}
Now, we are in position to find the upper estimate for $A_3$. Using  \eqref{IBP-estimate}, we have that
\begin{align*}
A_3 & \lesssim \sup_{N+1 \leq k <\infty } 2^{-\frac{k}{r}} \bigg(\sum_{i=N+1}^k \int_{x_{i-1}}^{x_{i}} \bigg(\int_t^{x_{i}} u \bigg)^{\frac{p}{p-q}}  d\big[V_p(a,t)^{\frac{pq}{p-q}}\big]   \bigg)^{\frac{p-q}{pq}}\\
& \quad + \sup_{N+2 \leq k <\infty} 2^{-\frac{k}{r}}  \bigg(\sum_{j=N+1}^{k-1} \bigg(\int_{x_{j}}^{x_{j+1}} u \bigg) \bigg(\int_{a}^{x_j}  d\big[V_p(a,t)^{\frac{pq}{p-q}}\big]   \bigg)^{\frac{p-q}{p}} \bigg)^{\frac{1}{q}} \notag\\
& \qquad +  \sup_{N+1 \leq k <\infty} 2^{-\frac{k}{r}} \lim_{t \rightarrow a+} \bigg(\int_t^{x_{k}} u \bigg)^{\frac{1}{q}}  V_p(a,t) \end{align*}
Further, \eqref{sup-sum} yields, 
\begin{align*}
A_3 & \lesssim 
\sup_{N+1 \leq k <\infty} 2^{-\frac{k}{r}} \bigg( \int_{x_{k-1}}^{x_{k}} \bigg(\int_t^{x_{k}} u \bigg)^{\frac{p}{p-q}}  d\big[V_p(a,t)^{\frac{pq}{p-q}}\big]   \bigg)^{\frac{p-q}{pq}} \notag\\
& \quad + \sup_{N+1 \leq k <\infty} 2^{-\frac{k}{r}} \bigg(\int_{x_{k}}^{x_{k+1}} u \bigg)^{\frac{1}{q}} \bigg( \int_{a}^{x_{k}}  d\big[V_p(a,t)^{\frac{pq}{p-q}}\big]\bigg)^{\frac{p-q}{pq}} \notag\\
& \qquad +  \sup_{N+1 \leq k <\infty } 2^{-\frac{k}{r}} \lim_{t \rightarrow a+} \bigg(\int_t^{x_{k}} u \bigg)^{\frac{1}{q}}  V_p(a,t)\\
& =: A_{3,1} + A_{3,2} + A_{3,3}.
\end{align*}
Integrating by parts again, we have that 
\begin{align*}
A_{3,1} & \lesssim \sup_{N+1 \leq k <\infty} 2^{-\frac{k}{r}} \bigg(\int_{x_{k-1}}^{x_{k}} \bigg(\int_t^{x_{k}} u \bigg)^{\frac{q}{p-q}} u(t)  V_p(a,t)^{\frac{pq}{p-q}} dt    \bigg)^{\frac{p-q}{pq}}. 
\end{align*}
Thus, \eqref{V-cut} gives,
\begin{align}\label{A_31<A3*+B1*}
A_{3,1} & \lesssim \sup_{N+1 \leq k <\infty} 2^{-\frac{k}{r}} \bigg(\int_{x_{k-1}}^{x_{k}} \bigg(\int_t^{x_{k}} u \bigg)^{\frac{q}{p-q}} u(t)  V_p(x_{k-1},t)^{\frac{pq}{p-q}} dt  \bigg)^{\frac{p-q}{pq}} \notag\\
& \quad + \sup_{N+2 \leq k <\infty} 2^{-\frac{k}{r}} \bigg(\int_{x_{k-1}}^{x_{k}} u \bigg)^{\frac{1}{q}}  V_p(a,x_{k-1}) \notag\\
& \lesssim \mathcal{A}_3 + \mathcal{B}_1. 
\end{align}
Additionally,
\begin{align}\label{A_32<A3*+B1*}
A_{3,2} \lesssim \sup_{N+1 \leq k <\infty} 2^{-\frac{k}{r}} \bigg(\int_{x_{k}}^{x_{k+1}} u \bigg)^{\frac{1}{q}} V_p(a,x_k)  \leq \mathcal{B}_1.
\end{align}

Lastly, we will find a suitable upper estimate for $A_{3,3}$. To this end, we will treat the cases $N= -\infty$ and $N <\infty$, separately. Observe that, if $N=-\infty$, since $x_i \rightarrow a$ if $i\rightarrow -\infty$, we have for any $k$,
\begin{align}\label{lim<sup-1}
\lim_{t \rightarrow a+} \bigg(\int_t^{x_{k}} u \bigg)^{\frac{1}{q}}  V_p(a,t) = \lim_{i \rightarrow -\infty} \bigg(\int_{x_i}^{x_{k}} u \bigg)^{\frac{1}{q}}  V_p(a,x_i) \leq \sup_{i < k}  \bigg(\int_{x_i}^{x_{k}} u \bigg)^{\frac{1}{q}} V_p(a,x_i).    
\end{align}
Then, then using \eqref{lim<sup-1} together with \eqref{3-sup-equiv}, we get
\begin{align*}
A_{3,3} & \lesssim  \sup_{k\in \mathbb{Z} } 2^{-\frac{k}{r}} \sup_{i < k}  \bigg(\int_{x_i}^{x_{k}} u \bigg)^{\frac{1}{q}}    V_p(a,x_i) \\
&\approx \sup_{k \in \mathbb{Z}} 2^{-\frac{k}{r}} \bigg(\int_{x_{k-1}}^{x_{k}} u \bigg)^{\frac{1}{q}} V_p(a,x_{k-1}) \lesssim \mathcal{B}_1. 
\end{align*}
On the other hand, if, $N > -\infty$
\begin{align} \label{lim<sup-2}
 \lim_{t \rightarrow a+} \bigg(\int_t^{x_{k}} u \bigg)^{\frac{1}{q}}  V_p(a,t) &\leq \esup_{t \in (a, x_{N+1})} \bigg(\int_t^{x_k} u \bigg)^{\frac{1}{q}}  V_p(a,t) \notag\\
 &\approx \esup_{t \in (a, x_{N+1})} \bigg(\int_t^{x_{N+1}} u \bigg)^{\frac{1}{q}}  V_p(a,t) +  \bigg(\int_{x_{N+1}}^{x_{k}} u \bigg)^{\frac{1}{q}}  V_p(a,x_{N+1}).
\end{align}
Additionally, it is easy to see that
\begin{align}
\esup_{\tau \in (x, y)} \bigg(\int_{\tau}^y u \bigg)^{\frac{1}{q}} V_p(x,\tau) \leq \bigg(\int_x^y \bigg(\int_t^y u \bigg)^{\frac{q}{p-q}} u(t) V_p(x,t)^{\frac{pq}{p-q}} dt  \bigg)^{\frac{p-q}{pq}}. \label{sup-int-estimate}
\end{align}
First, using \eqref{lim<sup-2}, we get
\begin{align*}
A_{3,3} & \lesssim  \sup_{N+1 \leq k <\infty } 2^{-\frac{k}{r}} \esup_{t \in (a, x_{N+1})} \bigg(\int_t^{x_{N+1}} u \bigg)^{\frac{1}{q}}  V_p(a,t) \\
& \hspace{2cm} + \sup_{N+2 \leq k <\infty } 2^{-\frac{k}{r}} \sup_{N+1 \leq i < k }\bigg(\int_{x_{i}}^{x_{k}} u \bigg)^{\frac{1}{q}}  V_p(a,x_{i}).
\end{align*}
Then, using \eqref{sup-int-estimate} for the first term and applying \eqref{3-sup-equiv} for the second term, we get
\begin{align*}
A_{3,3} & \lesssim  \sup_{N+1 \leq k <\infty } 2^{-\frac{k}{r}} \bigg(\int_{x_{k-1}}^{x_k} \bigg(\int_t^{x_k} u \bigg)^{\frac{q}{p-q}} u(t) \,  V_p(a,t)^{\frac{pq}{p-q}}dt \bigg)^{\frac{p-q}{pq}} \\
& \hspace{2cm} + \sup_{N+2 \leq k <\infty } 2^{-\frac{k}{r}} \bigg(\int_{x_{k-1}}^{x_k} u \bigg)^{\frac{1}{q}} V_p(a, x_{k-1}).
\end{align*}
Finally, using \eqref{V-cut}, we arrive at
\begin{align*}
A_{3,3}  \lesssim  \mathcal{A}_3 + \sup_{N+2 \leq k <\infty } 2^{-\frac{k}{r}} \bigg(\int_{x_{k-1}}^{x_k} u \bigg)^{\frac{1}{q}} V_p(a, x_{k-1}) \lesssim \mathcal{A}_3 + \mathcal{B}_1.
\end{align*}
Consequently, we have for any $N \in \mathbb{Z}\cup \{-\infty\}$, $A_{3,3} \lesssim \mathcal{A}_3 + \mathcal{B}_1$. Combining the last estimate with \eqref{A_31<A3*+B1*} and \eqref{A_32<A3*+B1*}, we arrive at $A_3 \lesssim \mathcal{A}_3 + \mathcal{B}_1$, hence, $C \approx A_3 + \mathcal{B}_1$.

Let us now continue the proof by showing $A_3 + \mathcal{B}_1\approx C_1+ C_4$. 

To this end, taking $\alpha = 1/r $ and 
\begin{equation*}
h(x) = \bigg(\int_a^{x} \bigg(\int_t^x u \bigg)^{\frac{q}{p-q}} u(t) V_p(a,t)^{\frac{pq}{p-q}}dt \bigg)^{\frac{p-q}{pq}}, \quad x \in (a, b)
\end{equation*}
in \eqref{sup.equiv}, we have that $A_3 \approx C_4$. Moreover, we have already shown in \eqref{B1*<C1} and \eqref{C1<A1+B1*} that $\mathcal{B}_1 \lesssim C_1 \lesssim A_1 + \mathcal{B}_1$. Furthermore, \eqref{sup-int-estimate} yields that $A_1 \lesssim A_3$. Consequently, we have $A_3 + \mathcal{B}_1 \lesssim C_4 + C_1 \lesssim A_3 + \mathcal{B}_1$, which is the desired estimate. 

\rm(iv) Let $\max\{r,q \}< p$. Then, using [Theorem~\ref{T:disc.char.}, (iv)], we have that $C \approx \mathcal{B}_2 + \mathcal{A}_4$. First of all, We will show that $\mathcal{B}_2 + \mathcal{A}_4 \approx B_2 + A_4$, where  
\begin{equation*}
A_4 := \bigg( \sum_{k=N+1}^{\infty} 2^{-k\frac{r}{p-r}} \bigg( \int_{a}^{x_{k}} \bigg(\int_{t}^{x_{k}} u \bigg)^{\frac{q}{p-q}} u(t) V_p(a, t)^{\frac{pq}{p-q}} dt  \bigg)^{\frac{r(p-q)}{q(p-r)}}  \bigg)^{\frac{p-r}{pr}},
\end{equation*} 
and $B_2$ is defined in \eqref{B2}.

It is clear that $\mathcal{A}_4 \leq A_4$. We have already shown in \eqref{B2*<A2+B2} that $\mathcal{B}_2 \lesssim A_2 + B_2$.  Moreover, analogously as in the previous proof, using \eqref{sup-int-estimate}, one can easily see that $A_2 \lesssim A_4$. Thus, $\mathcal{B}_2 + \mathcal{A}_4 \lesssim B_2 + A_4$ follows.  

It remains to prove that $B_2 + A_4 \lesssim \mathcal{B}_2 + \mathcal{A}_4$. Assume that $\max\{\mathcal{A}_4, \mathcal{B}_2\}<\infty$. Then, using the same steps as in the previous case, we can see that \eqref{lim-0} holds, therefore \eqref{IBP-estimate} is true in this case, as well. 

Applying \eqref{IBP-estimate} combined with \eqref{dec-sum-sum}, we obtain that
\begin{align*}
A_4 & \lesssim  \bigg( \sum_{k=N+1}^{\infty} 2^{-k\frac{r}{p-r}} \bigg( \int_{x_{k-1}}^{x_{k}} \bigg(\int_{t}^{x_{k}} u \bigg)^{\frac{p}{p-q}} d\bigg[ V_p(a, t)^{\frac{pq}{p-q}} \bigg]   \bigg)^{\frac{r(p-q)}{q(p-r)}}  \bigg)^{\frac{p-r}{pr}} \\
& \quad +  \bigg( \sum_{k=N+2}^{\infty} 2^{-k\frac{p}{p-r}} \bigg(\int_{x_{k-1}}^{x_{k}} u \bigg)^{\frac{pr}{q(p-r)}} \bigg( \int_{a}^{x_{k-1}} d\bigg[ V_p(a, t)^{\frac{pq}{p-q}} \bigg]  \bigg)^{\frac{r(p-q)}{q(p-r)}}  \bigg)^{\frac{p-r}{pr}}\\
& \qquad  + \bigg( \sum_{k=N+1}^{\infty} 2^{-k\frac{p}{p-r}}  \bigg[\lim_{t \rightarrow a+} \bigg(\int_{t}^{x_{k}} u \bigg)^{\frac{1}{q}} V_p(a, t)\bigg]^{\frac{pr}{p-r}} \bigg)^{\frac{p-r}{pr}} \\
& =: A_{4,1} + A_{4,2} + A_{4,3}.
\end{align*}
holds.

As in the proof of the previous case, using integration by parts in combination with \eqref{V-cut}, we have that
\begin{align}
A_{4,1} &   \lesssim \bigg( \sum_{k=N+1}^{\infty} 2^{-k\frac{p}{p-r}} \bigg( \int_{x_{k-1}}^{x_k} \bigg(\int_{t}^{x_k} u \bigg)^{\frac{q}{p-q}} u(t) V_p(a,t)^{\frac{pq}{p-q}} dt  \bigg)^{\frac{r(p-q)}{q(p-r)}}  \bigg)^{\frac{p-r}{pr}} \notag\\
& \approx \mathcal{A}_4 + \bigg( \sum_{k=N+2}^{\infty} 2^{-k\frac{p}{p-r}} \bigg( \int_{x_{k-1}}^{x_k}  u  \bigg)^{\frac{pr}{q(p-r)}} V_p(a, x_{k-1})^{\frac{pr}{p-r}}   \bigg)^{\frac{p-r}{pr}} \notag \\
& \lesssim \mathcal{A}_4 + \mathcal{B}_2.\label{A_41<A4+B2}
\end{align}
On the other hand, it is clear that
\begin{align}
A_{4,2} \lesssim \bigg( \sum_{k=N+2}^{\infty} 2^{-k\frac{p}{p-r}} \bigg( \int_{x_{k-1}}^{x_k}  u  \bigg)^{\frac{pr}{q(p-r)}} V_p(a, x_{k-1})^{\frac{pr}{p-r}}   \bigg)^{\frac{p-r}{pr}}
\lesssim \mathcal{B}_2. \label{A42<A4+B2}
\end{align}
Furthermore, if $N = -\infty$, using \eqref{lim<sup-1}, and then applying \eqref{3-sum-equiv}, we get 
\begin{align*}
A_{4,3} & \lesssim  \bigg( \sum_{k=-\infty}^{\infty} 2^{-k\frac{p}{p-r}} \sup_{-\infty < i < k}  \bigg(\int_{x_{i}}^{x_k} u \bigg)^{\frac{pr}{q(p-r)}}  V_p(a,x_i)^{\frac{pr}{p-r}} \bigg)^{\frac{p-r}{pr}}\\
& \approx 
\bigg( \sum_{k=-\infty}^{\infty} 2^{-k\frac{p}{p-r}} \bigg(\int_{x_{k-1}}^{x_k} u \bigg)^{\frac{pr}{q(p-r)}}  V_p(a,x_{k-1})^{\frac{pr}{p-r}} \bigg)^{\frac{p-r}{pr}}\lesssim \mathcal{B}_2.
\end{align*}
If $N >-\infty$, \eqref{lim<sup-2} together with \eqref{sup-int-estimate} yields, 
\begin{align*}
A_{4,3} & \lesssim \bigg( \sum_{k=N+1}^{\infty} 2^{-k\frac{p}{p-r}} \bigg(\int_{x_{k-1}}^{x_k} \bigg(\int_t^{x_k} u \bigg)^{\frac{q}{p-q}} u(t) \,  V_p(a,t)^{\frac{pq}{p-q}}dt \bigg)^{\frac{r(p-q)}{q(p-r)}} \bigg)^{\frac{p-r}{pr}} \\
& + \bigg( \sum_{k=N+2}^{\infty} 2^{-k\frac{p}{p-r}}  \sup_{N+1 \leq i < k }\bigg(\int_{x_{i}}^{x_{k}} u \bigg)^{\frac{pr}{q(p-r)}}  V_p(a,x_{i})^{\frac{pr}{p-r}} \bigg)^{\frac{p-r}{pr}}.
\end{align*}
Applying \eqref{V-cut} to the first term and \eqref{3-sum-equiv} to the second term, we have
\begin{align*}
A_{4,3} & \lesssim   \mathcal{A}_4 + \bigg( \sum_{k=N+2}^{\infty} 2^{-k\frac{p}{p-r}} \bigg(\int_{x_{k-1}}^{x_k} u \bigg)^{\frac{pr}{q(p-r)}} V_p(a,x_{k-1})^{\frac{pr}{p-r}} \bigg)^{\frac{p-r}{pr}}\lesssim \mathcal{A}_4 + \mathcal{B}_2.
\end{align*}
Thus, for any $N \in \mathbb{Z}\cup \{-\infty\}$, we arrive at $A_{4,3} \lesssim \mathcal{A}_4 + \mathcal{B}_2$. This fact, combined with \eqref{A_41<A4+B2} and \eqref{A42<A4+B2} yields $A_4 \lesssim \mathcal{A}_4 + \mathcal{B}_2$. Since $B_2 \leq \mathcal{B}_2$, we have $A_4 + B_2 \lesssim \mathcal{A}_4 + \mathcal{B}_2$ and consequently, $C\approx B_2 + A_4$. 

Now, we will show that $B_2 + A_4 \approx C_3 + C_5$.
Applying \eqref{int.equiv} with $\alpha = \frac{r}{p-r}$ and 
\begin{equation*}
h(x) = \bigg( \int_{a}^{x} \bigg(\int_{t}^{x} u \bigg)^{\frac{q}{p-q}} u(t) V_p(a,t)^{\frac{pq}{p-q}} dt \bigg)^{\frac{r(p-q)}{q(p-r)}}, \quad x\in (a, b),
\end{equation*}
it is clear that $A_4 \approx C_5$. We have also shown in \eqref{B2<C3} that $B_2 \lesssim C_3$. Hence, it remains to show that  $C_3 \lesssim A_4 + B_2$. To this end, we can use \eqref{sup-int-estimate}, and obtain $A_2 \lesssim A_4$. Moreover, we know from \eqref{C3<A2+B2} that $C_3 \lesssim A_2 + B_2$. Consequently, $C_3 \lesssim A_4+ B_2$ holds and the proof is complete. 
\qed

\

\textbf{Proof of Theorem~\ref{Cor}}
We will prove that inequality \eqref{monot.ineq-ab} holds for all $f \in \mathfrak{M}^{\uparrow}(a,b)$ if and only if inequality
\begin{equation}\label{special}
\bigg(\int_a^b \bigg(\int_a^x \bigg(\int_a^t h(\tau) d\tau \bigg)^\frac{1}{p} u(t) dt \bigg)^{q} w(x) dx  \bigg)^{\frac{p}{q}} \leq C^p \int_a^b h(x) \bigg(\int_x^b v\bigg) dx
\end{equation}
holds for all $h \in \mathfrak{M}^+(a,b)$. 

Assume that \eqref{monot.ineq-ab} holds for all $f \in \mathfrak{M}^{\uparrow}(a,b)$. Substituting $f(x)= \big(\int_a^x h\big)^{\frac{1}{p}}$, $x\in (a,b)$ for $h \in \mathfrak{M}^+(a,b)$ in \eqref{monot.ineq-ab} and applying Fubini on the right-hand side, \eqref{special} follows. 

Conversely, assume that \eqref{special} holds for all $h \in \mathfrak{M}^+(a,b)$.  Since any $f \in \mathfrak{M}^{\uparrow}(a,b)$, even if $f(0)>0$, can be approximated pointwise from below by a function of the form $f(x)^p=\int_a^x h$, $x\in (a,b)$, then  the validity of \eqref{special}  yields \eqref{monot.ineq-ab}. Therefore, the result follows from Theorem~\ref{T:main}.

\qed

\

\textbf{Acknowledgments}
T.~\"{U}nver would like to express her gratitude to the Institute of Mathematics of the Czech Academy of Sciences for hosting her and providing her a perfect working atmosphere and supplying technical support.
The research of  A.~Gogatishvili 
was partially supported by  the grant project 23-04720S of the Czech Science Foundation (GA\v{C}R),  The Institute of Mathematics, CAS is supported  by RVO:67985840  and by  Shota Rustaveli National Science Foundation (SRNSF), grant no: FR21-12353.
 The research of T.~\"{U}nver was supported by the grant of The Scientific and Technological Research Council of Turkey (TUBITAK), Grant No: 1059B192000075.


\begin{thebibliography}{10}
	
\bibitem{BloKer91}
S.~Bloom and R.~Kerman, Weighted norm inequalities for operators of
{H}ardy type, Proc. Amer. Math. Soc., 113 (1991), 135--141.
https://doi.org/10.2307/2048449.
	
\bibitem{GKPS}
A.~Gogatishvili, M.~K\v{r}epela, L.~Pick, and F.~Soudsk\'{y}, Embeddings of {L}orentz-type spaces involving weighted integral means, J. Funct. Anal., 273 (2017), 2939--2980. https://doi.org/10.1016/j.jfa.2017.06.008.
	
\bibitem{GMPTU}
A.~Gogatishvili, Z.~Mihula, L.~Pick, H.~Tur\v{c}inov\'{a}, and T.~\"{U}nver, Weighted inequalities for a superposition of the {C}opson operator and the {H}ardy operator, J. Fourier Anal. Appl., 28 (2022), Paper No. 24. 24 pp. https://doi.org/10.1007/s00041-022-09918-6.
	
\bibitem{GMU-CopCes}
A.~Gogatishvili, R.~Mustafayev, and T.~\"{U}nver, Embeddings between	weighted {C}opson and {C}es\`aro function spaces, Czechoslovak Math. J., 67(142) (2017), 1105--1132.  https://doi.org/10.21136/CMJ.2017.0424-16.
	
\bibitem{GogMus-MIA}
A.~Gogatishvili and R.~C. Mustafayev, Weighted iterated {H}ardy-type inequalities, Math. Inequal. Appl., 20 (2017), 683--728. https://doi.org/10.7153/mia-20-45.
	
\bibitem{GMU-cLMLM}
A.~Gogatishvili, R.~C. Mustafayev, and T.~\"{U}nver, Embedding relations between weighted complementary local {M}orrey-type spaces and weighted local {M}orrey-type spaces, Eurasian Math. J., 8 (2017), 34--49.
	
\bibitem{GogPick2003}
A.~Gogatishvili and L.~Pick, Discretization and anti-discretization of rearrangement-invariant norms, Publ. Mat., 47 (2003), 311--358. https://doi.org/10.5565/PUBLMAT\_47203\_02
	
\bibitem{GPU-JFA}
A.~Gogatishvili, L.~Pick, and T.~\"{U}nver, Weighted inequalities involving {H}ardy and {C}opson operators, J. Funct. Anal., 283 (12) (2022), Paper No. 109719. 50 pp. https://doi.org/10.1016/j.jfa.2022.109719
	
\bibitem{GogStep-JMAA}
A.~Gogatishvili and V.~D. Stepanov, Reduction theorems for operators on the cones of monotone functions, J. Math. Anal. Appl., 405 (2013), 156--172. https://doi.org/10.1016/j.jmaa.2013.03.046.
	
\bibitem{GogStep2013}
A.~Gogatishvili and V.~D. Stepanov, Reduction theorems for weighted integral inequalities on the cone of monotone functions, Uspekhi Mat. Nauk, 68 (2013), 3--68. https://doi.org/10.1070/rm2013v068n04abeh004849.
	
\bibitem{Goldman12}
M.~L. Goldman, Order-sharp estimates for {H}ardy-type operators on the cones of functions with properties of monotonicity, Eurasian Math. J., 3 (2012), 53--84. 
	
\bibitem{Goldman13}
M.~L. Goldman, Some equivalent criteria for the boundedness of {H}ardy-type operators on the cone of quasimonotone functions, Eurasian Math. J., 4 (2013), 43--63.
	
\bibitem{Hein-Step}
H.~P. Heinig and V.~D. Stepanov, Weighted {H}ardy inequalities for increasing functions, Canad. J. Math., 45 (1993), 104--116. https://doi.org/10.4153/CJM-1993-006-3.
	
\bibitem{KufPerSam}
A.~Kufner, L.-E. Persson, and N.~Samko, Weighted inequalities of {H}ardy type,, World Scientific Publishing Co. Pte. Ltd., Hackensack, NJ, second~ed., 2017. 

\bibitem{Krepela-ker}
M.~K\v{r}epela, Boundedness of {H}ardy-type operators with a kernel: integral weighted conditions for the case {$0<q<1\le p<\infty$}, Rev. Mat. Complut., 30 (2017), 547--587.  https://doi.org/10.1007/s13163-017-0230-9.
	
\bibitem{KrePick-CC}
M.~K\v{r}epela and L.~Pick, Weighted inequalities for iterated {C}opson integral operators, Studia Math., 253 (2020), 163--197. https://doi.org/10.4064/sm181016-5-5.
	
\bibitem{Lai99}
Q.~Lai, Weighted modular inequalities for {H}ardy type operators, Proc. London Math. Soc. (3), 79 (1999), 649--672.
https://doi.org/10.1112/S0024611599012010.
	
\bibitem{MR-Saw89}
F.~J. Mart\'{\i}n-Reyes and E.~Sawyer, Weighted inequalities for {R}iemann-{L}iouville fractional integrals of order one and greater, Proc. Amer. Math. Soc., 106 (1989), 727--733. https://doi.org/10.2307/2047428.
	
\bibitem{Mus-Pos} 
R.~Mustafayev, On weighted iterated {H}ardy-type inequalities, Positivity, 22 (2018), 275--299. https://doi.org/10.1007/s11117-017-0512-y.
	
\bibitem{Myas-Per-Step}
E.~A. Myasnikov, L.~E. Persson, and V.~D. Stepanov, On the best constants in certain integral inequalities for monotone functions, Acta Sci. Math. (Szeged), 59 (1994), 613--624.
	
\bibitem{Oinarov94}
R.~O\u{\i}narov, Two-sided estimates for the norm of some classes of integral operators, Proc. Steklov Inst. Math., 204 (1994), 205--214.
	
\bibitem{Prok2013}
D.~V. Prokhorov, On a weighted inequality of {H}ardy type, Dokl. Akad. Nauk, 453 (2013), 265--267. https://doi.org/10.1134/s1064562413060161.
	
\bibitem{ProkStep}
D.~V. Prokhorov and V.~D. Stepanov, On weighted {H}ardy inequalities in mixed norms, Proc. Steklov Inst. Math., 283 (2013), 149--164. Translation of Tr. Mat. Inst. Steklova {{\bf{2}}83} (2013), 155--170.
	
\bibitem{SinStep}
G.~Sinnamon and V.~D. Stepanov, The weighted {H}ardy inequality: new proofs and the case {$p=1$}, J. London Math. Soc. (2), 54 (1996), 89--101. https://doi.org/10.1112/jlms/54.1.89.
	
\bibitem{Step94}
V.~D. Stepanov, Weighted norm inequalities of {H}ardy type for a class of integral operators, J. London Math. Soc. (2), 50 (1994), 105--120. https://doi.org/10.1112/jlms/50.1.105.
	
\bibitem{Unver-Ces}
T.~\"{U}nver, Embeddings between weighted {C}es\`aro function spaces, Math. Inequal. Appl., 23 (2020), 925--942. https://doi.org/10.7153/mia-2020-23-72.
	
\end{thebibliography}
\end{document}